\renewcommand{\epsilon}{\varepsilon}
\DeclareMathOperator{\dvg}{div} 
\DeclareMathOperator{\dist}{dist} 
\DeclareMathOperator{\graph}{graph}
  \DeclareMathOperator{\area}{Area}
  \DeclareMathOperator{\length}{Length}
   \DeclareMathOperator{\TC}{TotalCurvature}
   \DeclareMathOperator{\CN}{Cone}
\def\R{\mathbb{R}}
\def\C{\mathbb{C}}
\def\N{\mathbb{N}}
\def\d{\delta}
\def\a{\alpha}
\def\e{\epsilon}
\def\r{\rho}
\def\n{\nu}
\def\s{\sigma}
\def\l{\lambda}
\def\k{\kappa}
\def\H{\mathcal{H}}
\def\wt{\widetilde}
\def\ov{\overline}
\newtheorem{theorem}{Theorem}[section]
\newtheorem{lemma}[theorem]{Lemma}
\newtheorem{remark}[theorem]{Remark}
\newtheorem{corollary}[theorem]{Corollary}
\newtheorem{definition}[theorem]{Definition}
\begin{document}
\begin{title}
{Density estimates for compact surfaces with total boundary curvature less than $4\pi$.}
\end{title}
\begin{author}{Theodora Bourni \and Giuseppe Tinaglia~\footnote{Partially supported by The Leverhulme Trust and EPSRC grant no. EP/I01294X/1}}\end{author}
\date{}
\maketitle
\begin{abstract}
In this paper we obtain density estimates for compact surfaces immersed in $\mathbb{R}^n$ with total boundary curvature less than $4\pi$ and with sufficiently small $L^p$ norm of the mean curvature, $p>2$. Our results generalize the main results in~\cite{eww1}. We then apply our estimates to discuss the geometry and topology of such surfaces.\end{abstract}

\section{Introduction}

In  \cite{eww1}, Ekholm, White and Wienholtz proved some density estimates to show that a classical minimal surface immersed in $\mathbb{R}^n$ with total boundary curvature at most $4\pi$ is  embedded up to and including the boundary. Their results were later generalized by Choe and Gulliver to minimal surfaces in Riemannian manifolds with sectional curvatures bounded above by a non-positive constant~\cite{cgu3}.

In this paper we show that the density estimates in~\cite{eww1} hold for compact surfaces immersed in $\mathbb{R}^n$ with total boundary curvature less than $4\pi$ and with sufficiently small $L^p$ norm of the mean curvature, $p>2$. In particular, it follows that such surfaces are also embedded up to and including the boundary.

In Section~\ref{appl} we prove that when $n=3$, if the $C^{0,\alpha}$ seminorm of the mean curvature is bounded, then $|A|$ is bounded. We use this result to give a uniform upper bound for the genus of such surfaces. See~\cite{tin3} for the minimal case.

\section{Extended monotonicity formula}

We begin this section by introducing the notations and definitions that we will use in the rest of the article.

\begin{definition}\label{branchimm} Let $M$ be a smooth surface. Then by a branched immersion of $M$ into $\R^n$ we mean a $C^1$ map

\[u: M\to \R^n\]
such that for any $p\in{M}$ there exists a neighborhood $U$ of $p$ such that either
\begin{itemize}
\item[(i)] $u|_U$ is an immersion, i.e. the matrix $[ Du(p)]$ has rank 2, or
\item[(ii)] $u$ has a branch point at $p$, i.e. there exists an integer $m\ge2$ such that under a diffeomorphism of $M$ and a rotation of $\R^n$
\[u|_U=(z^m, f(z))\in\C\times\R^{n-2} \simeq \R^{n}\]
for some $f:\C\to\R^{n-2}$, where we identify $(x,y)\in U$ with $z=x+iy\in \C$.
\end{itemize}
\end{definition}

From now on we let $M$ be  a compact smooth surface with smooth boundary $\Gamma$ and we consider   a branched immersion of $M$ into $\R^n$
 \[u: M\to \R^n\]
 such that
 \[u|_\Gamma \text{  is  } 1-1.\]
 Hence for any $p\in\ov{M}$, either $(i)$ or $(ii)$ of Definition \ref{branchimm} holds.

Abusing the notation, we will call the image of the surface and that of its boundary, under the map $u$, $M$ and $\Gamma$ as well. We let $B_r(x)$ denote the $n$-dimensional ball of radius $r$ centered at $x$ and for a ball of a different dimension we use an exponent to indicate the dimension.

In what follows we review some facts about the density of a surface, see for example \cite{si1}.

\begin{definition}\label{density} The density function is
\[\Theta: \R^n\to\R\]
\[\Theta(M,x)=\lim_{r\to 0}\frac{\area(M\cap B_r(x))}{\pi r^2},\]
where the area is defined using the area formula (cf. \cite[\S8]{si1} ), i.e. letting $A=u^{-1}(B_r(x))\subset M$
\[\area(M\cap B_r(x))=\int_{\R^n}\H^{0}(u^{-1}(y)\cap A)d\H^2(y)=\int_A Ju d\H^2.\]
Here $Ju$ is the Jacobian of $u$
\[Ju(z)=\sqrt{\det(du_z)^*\circ(du_z)}\]
with $du_z:T_zM\to\R^2$ being the induced linear map.
\end{definition}

\begin{remark}\label{embrmk}
The above definition along with the fact that $u|_\Gamma$ is 1-1, implies
\begin{itemize}
\item $\Theta(M,x)\in \N\setminus\{0\}$ when $ x\in M\setminus \Gamma$ (Interior),
\item $\Theta(M,x)+\frac12\in \N\setminus\{0\}$ when $ x\in \Gamma$ (Boundary).\end{itemize}
In particular, we have that $M$ is embedded around a point $x\in M$ if and only if
\begin{itemize}
\item $\Theta(M,x)=1$ (namely $<2$) when $ x\in M\setminus \Gamma$ (Interior),
\item $\Theta(M,x)=\frac12$ (namely $<
\frac32$) when $ x\in \Gamma$ (Boundary).
\end{itemize}
\end{remark}


We now recall the definition of the mean curvature vector.

\begin{definition}\label{meancurv}Let $M$ be a branched immersion. Let $H$ be a normal vector field
\[ H: M\to\R^n\]
that is locally integrable. We say that $M$ has mean curvature vector $H$, if
\begin{equation*}
\int_M \dvg_MXdA=\int_{M}X\cdot H dA\end{equation*}
 for every $C^1$ vector field $X$ with compact support in $\R^n\setminus \Gamma$.

\end{definition}

\begin{remark}\label{dvgthmrmk} Note that even if the mean curvature vector exists, in general it is not true that
\begin{equation}\label{dvgthmeq}
\int_M \dvg_MXdA=\int_{M}X\cdot H dA+\int_\Gamma X\cdot \n_M ds
\end{equation}
for every $C^1$ vector field $X$ in $\R^n$, where $\n_M$ here is the outward pointing normal to the boundary (tangent to $M$) and the integral over $\Gamma$ is with respect to the one-dimensional Hausdorff measure.

However, if $M$ is a branched immersion having $H\in L^p(M)$ for some $p>2$ and $\Gamma$ is $C^{1,\a}$ for some $\a\in(0,1]$, then equation~\eqref{dvgthmeq} holds. This is because then the immersion $u$ is $C^{1,\beta}$ up to and including the boundary, where $\beta=\min\{\a, 1-n/p\}$ (cf. \cite[8.36]{gt1}).
\end{remark}

Let $M$ be a branched immersion such that equation~\eqref{dvgthmeq} holds, meaning that $M$ has mean curvature vector $H$ and equation~\eqref{dvgthmeq} holds. Let $\nu_M$ be the outward pointing unit normal to $\partial M$,  and for $x_0\in \R^n$ and $r>0$ let  $B(r)=B_r(x_0)$. Then arguing as in the case when $\partial M=\emptyset$ (cf. \cite[\S 17]{si1}) and keeping track of the boundary term in equation~\eqref{dvgthmeq} we have the following monotonicity formula with a boundary term.
\begin{equation}\label{monotonicitywbdry}\begin{split}
\frac{d}{dr}&\left(\frac{\area(M\cap B(r))}{r^2}\right)=\frac{d}{dr}\int_{M\cap B(r)}\frac{|D^\perp\r|^2}{\r^2}dA\\&\hskip2cm+ r^{-3}\int_{M\cap B(r)}(x-x_0)\cdot H dA-r^{-3}\int_{\partial M\cap B(r)}(x-x_0)\cdot \nu_M d\H^1\end{split}
\end{equation}
where $\r=\r(x)=|x-x_0|$.

  For $x_0\in\R^n$  let $E$ be the exterior cone over $\Gamma$ with vertex $x_0$, i.e.
 \[E=\{x_0+t(x-x_0): x\in\Gamma, t\ge 1\},\]
 $M'= M\cup E$ and define
\[m(r)=\frac{\area(M'\cap B(r))}{ r^2}.\]

By use of the standard monotonicity formula \eqref{monotonicitywbdry}, in \cite{eww1} it is proven that when $H=0$, $m(r)$ is a non decreasing function of $r$. Here we show that $m(r)$ still satisfies a monotonicity property even when the surface is not necessarily minimal.

Using equation \eqref{monotonicitywbdry} with $M$ replaced by $E$, gives
\[
\frac{d}{dr}\left(\frac{\area(E\cap B(r))}{r^2}\right)=-r^{-3}\int_{\partial E\cap B(r)}(x-x_0)\cdot \nu_E d\H^1,
\]
since $D^\perp\r\equiv 0$ on $E$ and
\[(x-x_0)\cdot H=0\quad\forall x\in E.\]
Hence:
\begin{equation}\label{extmon1}\begin{split}
m'(r)=&\frac{d}{dr}\left(\frac{\area(M\cap B(r))}{r^2}\right)+\frac{d}{dr}\left(\frac{\area(E\cap B(r))}{ r^2}\right)\\
=&\frac{d}{dr}\int_{M\cap B(r)}\frac{|D^\perp\r|^2}{\r^2}dA+ r^{-3}\int_{M\cap B(r)}(x-x_0)\cdot H dA\\
&-r^{-3}\int_{\Gamma\cap B(r)}(x-x_0)\cdot \nu_M d\H^1-r^{-3}\int_{\Gamma\cap B(r)}(x-x_0)\cdot \nu_E d\H^1.\end{split}
\end{equation}
Furthermore, for $x\in \Gamma$, we have that
\[(x-x_0)\cdot(-\n_E)=\max\{(x-x_0)\cdot\nu:\nu\in N_x\Gamma\}\]
where $N_x\Gamma$ is the normal space to $\Gamma$ at $x$, thus
\[(x-x_0)\cdot (\nu_E+\nu_M)\le0.\]
Hence equation \eqref{extmon1} gives the extended monotonicity formula
\begin{equation}\label{extmon2}\begin{split}
m'(r)&\ge \frac{d}{dr}\int_{M\cap B(r)}\frac{|D^\perp\r|^2}{\r^2}dA+ r^{-3}\int_{M\cap B(r)}(x-x_0)\cdot H dA\\
&\ge  r^{-3}\int_{M\cap B(r)}(x-x_0)\cdot H dA.\end{split}
\end{equation}

We now introduce another definition.

\begin{definition} A branched immersion $M$ satisfies the property \emph{(P)} if $M$ has mean curvature vector $H$ and there exist $\a\in(0,1]$ and a constant $\Lambda\ge0$ such that
\begin{equation*}\label{propp}
\int_{M\cap B(r)}|H|dA\le \a \Lambda r^{\a+1} m(r),\,\forall r>0.\tag{P}
\end{equation*}
\end{definition}
%

\begin{lemma}\label{extmonLinftylemmaP}
Let $M$ be a branched immersion such that equation~\eqref{dvgthmeq} holds and satisfying the property \emph{(P)},  then $e^{\Lambda r^\a} m(r)$ is an increasing function of $r$, that is
\begin{equation}\label{extmonLinfty}
e^{\Lambda r^\a} m(r)-e^{\Lambda \s^\a} m(\s)\ge 0,  \text{ for } 0<\s<r<\infty.
\end{equation}
\end{lemma}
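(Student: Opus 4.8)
The plan is to convert the extended monotonicity inequality \eqref{extmon2} into a first-order linear differential inequality for $m$ and then absorb the zeroth-order term with an integrating factor. First I would discard the nonnegative term $\frac{d}{dr}\int_{M\cap B(r)}|D^\perp\r|^2/\r^2\,dA$ in \eqref{extmon2}, keeping only
\[m'(r)\ge r^{-3}\int_{M\cap B(r)}(x-x_0)\cdot H\,dA.\]
On $B(r)$ one has $|x-x_0|=\r\le r$, so the Cauchy--Schwarz estimate $(x-x_0)\cdot H\ge -\r|H|\ge -r|H|$ gives
\[m'(r)\ge -r^{-2}\int_{M\cap B(r)}|H|\,dA.\]

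Next I would invoke property \emph{(P)}, namely $\int_{M\cap B(r)}|H|\,dA\le \a\Lambda r^{\a+1}m(r)$, together with $m(r)\ge 0$, to arrive at the differential inequality
\[m'(r)\ge -\a\Lambda r^{\a-1}m(r),\qquad\text{i.e.}\qquad m'(r)+\a\Lambda r^{\a-1}m(r)\ge 0.\]
Since $\frac{d}{dr}e^{\Lambda r^\a}=\a\Lambda r^{\a-1}e^{\Lambda r^\a}$, multiplying the last inequality by the positive factor $e^{\Lambda r^\a}$ identifies the left-hand side as an exact derivative,
\[\frac{d}{dr}\left(e^{\Lambda r^\a}m(r)\right)=e^{\Lambda r^\a}\left(m'(r)+\a\Lambda r^{\a-1}m(r)\right)\ge 0,\]
and integrating this over $(\s,r)$ yields \eqref{extmonLinfty}.

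The one point requiring care beyond this elementary integrating-factor computation is regularity: $m$ is not a priori $C^1$, so the manipulations above must be read in the a.e./distributional sense in which the monotonicity formula \eqref{monotonicitywbdry}, and hence \eqref{extmon2}, is actually established (as in \cite[\S17]{si1}). I would note that $r\mapsto \area(M'\cap B(r))$ is nondecreasing, hence of locally bounded variation with distributional derivative a nonnegative measure, and that the derivative identities hold for a.e.\ $r$. Consequently $e^{\Lambda r^\a}m(r)$ agrees off a null set with a function whose distributional derivative is the nonnegative measure above plus a nonnegative singular part, so integrating the a.e.\ inequality $\frac{d}{dr}(e^{\Lambda r^\a}m(r))\ge 0$ over $(\s,r)$ can only be reinforced by any singular contribution. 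This is the sole step where more than a formal calculation is needed; everything else is the routine sign bookkeeping used to pass from \eqref{extmon2} to the stated monotonicity.
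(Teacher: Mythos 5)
Your proposal is correct and follows essentially the same route as the paper: drop the nonnegative term in \eqref{extmon2}, bound $(x-x_0)\cdot H\ge -r|H|$, apply property (P) to obtain $m'(r)\ge-\a\Lambda r^{\a-1}m(r)$, and integrate with the integrating factor $e^{\Lambda r^\a}$. Your added remark on the a.e./BV interpretation of $m'$ is a reasonable extra precaution that the paper leaves implicit.
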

\begin{proof}
The inequality in \eqref{extmon2} gives
\begin{equation*}
m'(r)\ge -r^{-2}\int_{M\cap B(r)}|H| dA\ge -\a\Lambda r^{\a-1}  m(r)\implies
\end{equation*}
\[\frac{d}{dr}\left(e^{\Lambda r^{\a}} m(r)\right)\ge0\]
Integrating this from $\s$ to $r$ proves the lemma.
\end{proof}

 We would like to compare $\lim_{r\to0}m(r)$, which gives  the density of $M$ at $x_0$ with $\lim_{r\to\infty}m(r)$, which we will later see  can be estimated using the total curvature of the boundary $\Gamma$. However, the previous lemma is not helpful when $r$ is going to infinity. Thus, we have to find a more efficient way to compare the ratios for large $r$.

 Let $r_0=r_0(M)$ denote the extrinsic diameter of $M$, that is
\[r_0(M)=\max_{x,y\in M}|x-y|.\]
Recall that we always assume that $M$ is compact and thus $r_0<\infty$.

\begin{lemma}\label{extmonLinftylemmabigP}
Let $M$ be a branched immersion such that equation~\eqref{dvgthmeq} holds and satisfying the property \emph{(P)}, then
 for any $r\ge r_0$:
\begin{equation}\label{extmonLinftybig}
m(r)\ge m(r_0)\left(1-\frac{\a\Lambda r_0^\a}{2}\left(1-\frac{r_0^2}{r^2}\right)\right).
\end{equation}
\end{lemma}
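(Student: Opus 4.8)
The plan is to integrate the extended monotonicity formula \eqref{extmon2} from $r_0$ to $r$, exploiting the fact that once $r\ge r_0$ the ball $B(r)$ already contains all of $M$. Concretely, taking $x_0\in M$ (the case relevant for the density estimates), the definition of the extrinsic diameter gives $|x-x_0|\le r_0$ for every $x\in M$, so that $M\subseteq\ov{B(r_0)}$ and hence $M\cap B(r)=M$ for all $r>r_0$. This is the crucial structural observation: for such $r$ the interior integral $\int_{M\cap B(r)}(x-x_0)\cdot H\,dA$ no longer depends on $r$, and in the error term the factor $|x-x_0|$ is bounded by the fixed number $r_0$ rather than by the growing radius $r$. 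This is exactly the point at which the diameter $r_0$ enters, and I expect it to be the only real content of the argument.

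Granting this, I would start from \eqref{extmon2}, discard the nonnegative gradient term, and estimate $(x-x_0)\cdot H\ge-|x-x_0|\,|H|\ge-r_0|H|$ on $M$. For $r>r_0$ this yields
\[ m'(r)\ge r^{-3}\int_M (x-x_0)\cdot H\,dA\ge -r_0\,r^{-3}\int_M|H|\,dA. \]
The right-hand side now decays like $r^{-3}$, which is integrable at infinity; this is precisely what the weaker bound $|x-x_0|\le r$ used in Lemma~\ref{extmonLinftylemmaP} fails to provide and is the reason $r_0$ is introduced here. To control the remaining constant I would invoke property \emph{(P)}: since $M\cap B(r)=M$ for $r>r_0$, letting $r\downarrow r_0$ and using the continuity of $m$ gives $\int_M|H|\,dA\le\a\Lambda r_0^{\a+1}m(r_0)$.

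Combining these, $m'(r)\ge-\a\Lambda r_0^{\a+2}m(r_0)\,r^{-3}$ for $r>r_0$, and integrating from $r_0$ to $r$ together with $\int_{r_0}^r s^{-3}\,ds=\tfrac12(r_0^{-2}-r^{-2})$ produces exactly
\[ m(r)-m(r_0)\ge-\frac{\a\Lambda r_0^\a}{2}\,m(r_0)\left(1-\frac{r_0^2}{r^2}\right), \]
which rearranges to \eqref{extmonLinftybig}. The computation is routine once the key observation is in place; the substance is recognizing that the containment $M\subseteq\ov{B(r_0)}$ both freezes the curvature integral and replaces the radius $r$ by the diameter $r_0$ in the error estimate, upgrading the non-integrable $r^{-2}$ decay of Lemma~\ref{extmonLinftylemmaP} to an integrable $r^{-3}$ decay and hence giving a bound that remains meaningful as $r\to\infty$.
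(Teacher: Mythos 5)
Your argument is correct and is essentially the paper's own proof: the authors likewise integrate \eqref{extmon2} from $r_0$ to $r$, use that $M\cap B(r)=M\cap B(r_0)$ for $r\ge r_0$ so the curvature integral is frozen (they phrase this as an integration by parts), bound $(x-x_0)\cdot H\ge -r_0|H|$, and apply property (P) at radius $r_0$. The only cosmetic difference is that you make explicit the implicit assumption $x_0\in \ov M$ needed for the containment $M\subseteq\ov{B(r_0)}$, which the paper leaves unstated.
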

\begin{proof}
Integrating the inequality in \eqref{extmon2} from $r_0$ to $r$ we have (using integration by parts)
\begin{equation*}\begin{split}
m(r)-m(r_0)\ge &\frac12\int_{M\cap B(r)}(x-x_0)\cdot H\left(\frac{1}{r_0^2}-\frac{1}{r^2}\right)dA\\
=&\frac12\int_{M\cap B(r_0)}(x-x_0)\cdot H\left(\frac{1}{r_0^2}-\frac{1}{r^2}\right)dA\\
\ge&-\frac12\a\Lambda r_0^{2+\a}m(r_0)\left(\frac{1}{r_0^2}-\frac{1}{r^2}\right)
\end{split}\end{equation*}
\end{proof}

\section{Density Esimates and Embeddedness}\label{denestsec}

We now state and prove our main density estimate.
\begin{theorem}\label{dencur} Let $M$ be a branched immersion in $\R^n$ such that equation~\eqref{dvgthmeq} holds and satisfying the property~\emph{(P)}. Then, for $x_0\in\R^n$

\begin{equation}\label{dencurLinfty}
\Theta(\CN_{x_0}\Gamma,x_0)\ge e^{-\Lambda r_0^\a}\left(1-\frac{\a\Lambda r_0^\a}{2}\right)\Theta(M, x_0)
\end{equation}
\end{theorem}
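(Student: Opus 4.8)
The plan is to realize both densities appearing in \eqref{dencurLinfty} as limiting values of the scaled area ratio $m(r)$, and then to connect the small-$r$ and large-$r$ regimes by chaining the two monotonicity statements of Lemma~\ref{extmonLinftylemmaP} and Lemma~\ref{extmonLinftylemmabigP}.

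First I would identify the two boundary values of $m$. As $r\to0$ the exterior cone $E$ contributes negligibly to $\area(M'\cap B(r))$: every point of $E$ lies at distance at least $\dist(x_0,\Gamma)$ from $x_0$, so $E\cap B(r)$ is empty for small $r$ when $x_0\notin\Gamma$, while if $x_0\in\Gamma$ the part of $E$ inside $B(r)$ comes from the shrinking arc of $\Gamma$ within distance $r$ of $x_0$ and has area $o(r^2)$. In either case $\area(E\cap B(r))/r^2\to0$, so by Definition~\ref{density}
\[
\lim_{r\to0}m(r)=\lim_{r\to0}\frac{\area(M\cap B(r))}{r^2}=\pi\,\Theta(M,x_0).
\]
As $r\to\infty$ the roles reverse: $M$ is compact, so $\area(M\cap B(r))\le\area(M)$ is bounded and $\area(M\cap B(r))/r^2\to0$, whereas $E$ and the full cone $\CN_{x_0}\Gamma$ differ only by the bounded-area region $\{t\le1\}$. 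Since a cone with vertex $x_0$ satisfies $\area(\CN_{x_0}\Gamma\cap B(r))=\pi\,\Theta(\CN_{x_0}\Gamma,x_0)\,r^2$ by scale invariance, we obtain
\[
\lim_{r\to\infty}m(r)=\lim_{r\to\infty}\frac{\area(E\cap B(r))}{r^2}=\pi\,\Theta(\CN_{x_0}\Gamma,x_0).
\]

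Next I would chain the two lemmas to compare these extreme values through the intermediate radius $r_0$. Applying \eqref{extmonLinfty} of Lemma~\ref{extmonLinftylemmaP} with $r=r_0$ and letting $\s\to0$ gives
\[
e^{\Lambda r_0^\a}m(r_0)\ge\lim_{\s\to0}e^{\Lambda\s^\a}m(\s)=\pi\,\Theta(M,x_0),
\]
that is, $m(r_0)\ge e^{-\Lambda r_0^\a}\pi\,\Theta(M,x_0)$. On the other hand, letting $r\to\infty$ in \eqref{extmonLinftybig} of Lemma~\ref{extmonLinftylemmabigP} and using $r_0^2/r^2\to0$ yields
\[
\pi\,\Theta(\CN_{x_0}\Gamma,x_0)=\lim_{r\to\infty}m(r)\ge m(r_0)\left(1-\frac{\a\Lambda r_0^\a}{2}\right).
\]

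Finally I would combine the two displayed bounds. If $1-\tfrac{\a\Lambda r_0^\a}{2}<0$ the claimed right-hand side of \eqref{dencurLinfty} is nonpositive and the estimate is trivial, since densities are nonnegative; otherwise the factor $1-\tfrac{\a\Lambda r_0^\a}{2}$ is nonnegative and may be multiplied into the lower bound for $m(r_0)$ without reversing the inequality, giving
\[
\pi\,\Theta(\CN_{x_0}\Gamma,x_0)\ge e^{-\Lambda r_0^\a}\left(1-\frac{\a\Lambda r_0^\a}{2}\right)\pi\,\Theta(M,x_0),
\]
which after dividing by $\pi$ is exactly \eqref{dencurLinfty}. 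The only genuinely delicate point, and the step I would treat most carefully, is the identification of the two limits of $m(r)$: in particular verifying that the exterior cone carries no density at $x_0$ when $x_0\in\Gamma$, and that the large-$r$ behaviour of $\area(E\cap B(r))/r^2$ reproduces exactly the density of the full cone. Once these are in hand, the chaining of the monotonicity lemmas is purely formal.
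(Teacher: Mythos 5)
Your proposal is correct and follows essentially the same route as the paper: the paper likewise chains Lemma~\ref{extmonLinftylemmaP} on $(0,r_0)$ with Lemma~\ref{extmonLinftylemmabigP} on $(r_0,\infty)$, identifies $\lim_{r\to0}m(r)=\pi\Theta(M,x_0)$ (noting the exterior cone contributes nothing at $x_0$ since $\Gamma$ is $C^1$) and $\lim_{R\to\infty}m(R)=\pi\Theta(\CN_{x_0}\Gamma,x_0)$, and then passes to the limits. Your extra remark on the sign of $1-\tfrac{\a\Lambda r_0^\a}{2}$ is a harmless point of care that the paper leaves implicit.
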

\begin{proof}
 Let $E$ be the exterior cone over $\Gamma$ with vertex $x_0$, $M'=M\cup E$ and $m(r)=\area(M'\cap B(r))/ r^2$, as before. Then by the extended monotonicity formula (Lemmas \ref{extmonLinftylemmaP}, \ref{extmonLinftylemmabigP}) we get for $r< r_0<R$
 \begin{equation}\label{mr1}
m(R)\ge m(r_0)\left(1-\frac{\a\Lambda r_0^\a}{2}\left(1-\frac {r_0^2}{R^2}\right)\right)\ge e^{\Lambda (r^\a-r_0^\a)}\left(1-\frac{\a\Lambda r_0^\a}{2}\left(1-\frac {r_0^2}{R^2}\right)\right) m(r)
\end{equation}
Letting $r\to 0$ and $R\to\infty$ we get
\[\begin{split}\pi\Theta(\CN_{x_0}\Gamma,x_0)&=\lim_{R\to\infty}m(R)\\
&\ge e^{-\Lambda r_0^\a}\left(1-\frac{\a\Lambda r_0^\a}{2}\right)\lim_{r\to0} m(r)=e^{-\Lambda r_0^\a}\left(1-\frac{\a\Lambda r_0^\a}{2}\right)\pi\Theta(M, x_0).\end{split}\]

We note that, as in \cite{eww1}, in the case when $x_0\in\Gamma$,
\[\lim_{r\to0} m(r)= \pi\Theta(M, x_0)\]
because
\[\lim_{r\to0} m(r)=\pi\Theta(M,x_0)+\lim_{r\to0} \frac{\area(E\cap B(r))}{ r^2}\]
and the last term is $0$, since $\Gamma$ is $C^1$.
\end{proof}

\begin{remark}\label{massupbd} If in the inequality \eqref{mr1}  in the proof of Theorem~\ref{dencur}, we only let $R\to\infty$ we get the following more general conclusion about the area ratios:
\begin{equation}\label{dencurLinftym}
\pi\Theta(\CN_{x_0}\Gamma,x_0)\ge e^{-\Lambda (r_0^\a-r^\a)}\left(1-\frac{\a\Lambda r_0^\a}{2}\right)m(r),\,\forall r\le r_0.
\end{equation}
\end{remark}

The following theorem is a result from~\cite{eww1} that compares the density of the cone with the total curvature of $\Gamma$ for points that are not in $\Gamma$.

\begin{theorem}\label{dencur0}
Let $\Gamma$ be a closed curve in $\R^n$ and $x_0\notin\Gamma$. Then
\[\length \Pi_{x_0}\Gamma\le\TC\Gamma\]
where
\[\Pi_{x_0}(x)=x_0+\frac{x-x_0}{|x-x_0|}.\]
\end{theorem}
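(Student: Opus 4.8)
The plan is to realize both sides of the inequality as lengths of curves on the unit sphere $S^{n-1}$ and to compare them through the spherical Crofton formula. After translating so that $x_0=0$, I would parametrize $\Gamma$ by arc length as a loop $\gamma\colon[0,L]\to\R^n$, noting that $\gamma$ never vanishes precisely because $x_0\notin\Gamma$; this is the one place the hypothesis is used. The radial projection is then the spherical curve $s\mapsto\gamma(s)/|\gamma(s)|$, whose length is $\length\Pi_{x_0}\Gamma$. On the other hand, $\TC\Gamma$ equals the length of the tangent indicatrix $T(s)=\gamma'(s)$, since $|T'(s)|=|\gamma''(s)|$ is the curvature of $\Gamma$.

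Next I would apply the Crofton formula to each of these two spherical curves. For a rectifiable curve $\eta$ on $S^{n-1}$ it reads $\length(\eta)=\frac{\pi}{|S^{n-1}|}\int_{S^{n-1}}\#(\eta\cap v^\perp)\,dv$, where $v^\perp$ denotes the equator $\{x:x\cdot v=0\}\cap S^{n-1}$ and the constant is fixed by testing on a great circle. For the linear functions $h_v(s)=\gamma(s)\cdot v$ one has $\gamma(s)/|\gamma(s)|\in v^\perp\iff h_v(s)=0$ and $T(s)\in v^\perp\iff h_v'(s)=0$, so Crofton yields
\[
\length\Pi_{x_0}\Gamma=\frac{\pi}{|S^{n-1}|}\int_{S^{n-1}} n(v)\,dv,\qquad
\TC\Gamma=\frac{\pi}{|S^{n-1}|}\int_{S^{n-1}} c(v)\,dv,
\]
where $n(v)$ is the number of zeros and $c(v)$ the number of critical points of $h_v$ on the loop $\Gamma$.

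The comparison then reduces to the pointwise inequality $n(v)\le c(v)$ for almost every $v$, which is Rolle's theorem on the circle: the $n(v)$ zeros of $h_v$ cut the loop $\Gamma$ into $n(v)$ arcs, and on each arc $h_v$ vanishes at both endpoints while having constant sign inside, hence attains an interior extremum and contributes at least one critical point. Integrating $n(v)\le c(v)$ over $S^{n-1}$ gives $\length\Pi_{x_0}\Gamma\le\TC\Gamma$.

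The main technical obstacle is the careful setup of the two Crofton identities rather than the Rolle step, which is elementary. I must restrict to the full-measure set of directions $v$ for which $0$ is a regular value of $h_v$ and of $h_v'$ (Sard's theorem), so that zeros and critical points are finite, transverse, and counted with correct multiplicities, and I must check that both integral-geometric formulas carry the \emph{same} constant $\pi/|S^{n-1}|$ so that the reduction to $n(v)\le c(v)$ is clean. If $\Gamma$ is only assumed rectifiable with a curvature measure rather than smooth, a further approximation step would be needed to pass to the limit from smooth curves.
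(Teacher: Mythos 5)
The paper does not prove this statement: it is imported verbatim from \cite{eww1}, so there is no in-paper proof to compare against. Your argument --- Crofton's formula on $S^{n-1}$ applied both to the radial projection and to the tangent indicatrix, followed by the circular Rolle inequality $n(v)\le c(v)$ for the linear height functions $h_v$ --- is correct and is essentially the integral-geometric proof given in \cite{eww1}. The one substantive difference is that \cite{eww1} first reduces to polygonal curves (where total curvature is a finite sum of exterior angles and the count of local extrema of $h_v$ is elementary, with no Sard-type genericity needed), whereas you work with $C^2$ curves and defer the approximation step; since the theorem as stated allows arbitrary closed curves of finite total curvature, the polygonal reduction is the cleaner way to reach full generality, but your smooth-case argument plus the limiting step you flag at the end is a perfectly valid alternative.
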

The theorem equivalently states the following: For the cone $\CN_{x_0}\Gamma=\{x_0+t(x-x_0):x\in\Gamma, 0\le t\le1\}$, since the ratios
\[
\frac{\area(\CN_{x_0}\Gamma\cap B(r))}{\pi r^2}=\frac{\length(\CN_{x_0}\Gamma\cap\partial B(r))}{2\pi r}=\frac{\length\Pi_{x_0}\Gamma}{2\pi}
\]
are constant, we have
\begin{equation}\label{conedensity}
\begin{split}\Theta(\CN_{x_0}\Gamma, x_0)&=\lim_{r\to0}\frac{\area(\CN_{x_0}\Gamma\cap B(r))}{\pi r^2}\\
&=\frac{\length \Pi_{x_0}\Gamma}{2\pi}\le\frac{\TC\Gamma}{2\pi}.\end{split}
\end{equation}

Using Theorems \ref{dencur0} and Theorem \ref{dencur}  gives embeddedness at interior points.

\begin{corollary}\label{intemb}
Let $\Gamma$ be a $C^1$ closed curve in $\R^n$ with total curvature at most $(4-\e)\pi$, $\e\in(0,2]$. Let $M$ be a branched immersion in $\R^n$ with boundary $\Gamma$ such that equation~\eqref{dvgthmeq} holds. If $H\in L^p(M)$ for some $p\in(2,\infty]$ then there exists $\d=\d(\e)\in (0,1)$ such that if
\[C(p)\|H\|_{L^p(M)} r_0^\a<\d\]
then the interior of $M$ is embedded.
Here $C(p)=1$ and $\alpha=1$ if $p=\infty$ and $C(p)=\frac{2p}{p-2}\left(\frac2\pi\right)^{1/p}$ and $\alpha=1-2/p$ if $p\in(2,\infty)$. \end{corollary}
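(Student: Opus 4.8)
The plan is to argue by contradiction: if the interior of $M$ fails to be embedded at some $x_0\in M\setminus\Gamma$, then Remark \ref{embrmk} forces $\Theta(M,x_0)\ge 2$, and I will show this is incompatible with the density estimate \eqref{dencurLinfty} once $\|H\|_{L^p}r_0^\a$ is small. The first task is therefore to check that the hypotheses of Theorem \ref{dencur} hold, i.e.\ that $M$ satisfies property (P) with the exponent $\a$ in the statement and a constant $\Lambda$ controlled by $\|H\|_{L^p}$.

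To verify (P) I would estimate $\int_{M\cap B(r)}|H|\,dA$. When $p=\infty$ this is immediate: $\int_{M\cap B(r)}|H|\,dA\le\|H\|_{L^\infty}\area(M\cap B(r))\le\|H\|_{L^\infty}r^2m(r)$, which is (P) with $\a=1$ and $\Lambda=\|H\|_{L^\infty}$, so that $\Lambda r_0^\a=C(\infty)\|H\|_{L^\infty}r_0^\a$. When $p\in(2,\infty)$, Hölder's inequality gives
\[\int_{M\cap B(r)}|H|\,dA\le\|H\|_{L^p(M)}\,\area(M\cap B(r))^{1-1/p}\le\|H\|_{L^p(M)}\,\bigl(r^2m(r)\bigr)^{1-1/p},\]
and the exponent of $r$ is exactly $2(1-1/p)=\a+1$ for $\a=1-2/p$. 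The surviving factor $m(r)^{1-1/p}$ must be linearized to $m(r)$: using a uniform lower bound $m(r)\ge\pi/2$ one writes $m(r)^{1-1/p}=m(r)\,m(r)^{-1/p}\le(2/\pi)^{1/p}m(r)$, yielding (P) with $\a\Lambda=(2/\pi)^{1/p}\|H\|_{L^p}$, i.e.\ $\Lambda r_0^\a=\tfrac12 C(p)\|H\|_{L^p}r_0^\a$. In either case $C(p)\|H\|_{L^p}r_0^\a<\d$ forces $\Lambda r_0^\a<\d$.

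The lower bound $m(r)\ge\pi/2$ is the step I expect to be the main obstacle, since Lemmas \ref{extmonLinftylemmaP} and \ref{extmonLinftylemmabigP}, which would deliver it, already presuppose (P), so a naive appeal is circular. I would break the circularity by feeding the unconditional Hölder bound directly into \eqref{extmon2}: from $m'(r)\ge-r^{-2}\int_{M\cap B(r)}|H|\,dA\ge-\|H\|_{L^p}\,r^{-2/p}m(r)^{1-1/p}$ one gets $\frac{d}{dr}\bigl(p\,m(r)^{1/p}\bigr)\ge-\|H\|_{L^p}\,r^{-2/p}$, and integrating from $0$, where $m(0^+)=\pi\Theta(M,x_0)\ge\pi$ at an interior point, yields $m(r)^{1/p}\ge\pi^{1/p}-\tfrac{\|H\|_{L^p}}{\a p}r^\a$ for $r\le r_0$; since $\|H\|_{L^p}r_0^\a$ is assumed small this keeps $m(r)$ above $\pi/2$, and for $r\ge r_0$ the quantity $\int_{M\cap B(r)}|H|$ no longer grows so the bound persists. (Alternatively one runs a short continuity/bootstrap argument in $r$.)

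With (P) established, I would finish by combining Theorem \ref{dencur} with the cone--curvature bound \eqref{conedensity}. Since $x_0\in M\setminus\Gamma$ we have $x_0\notin\Gamma$, so Theorem \ref{dencur0} gives $\Theta(\CN_{x_0}\Gamma,x_0)\le\TC\Gamma/2\pi\le(4-\e)/2=2-\e/2$, while \eqref{dencurLinfty} gives $\Theta(\CN_{x_0}\Gamma,x_0)\ge e^{-\Lambda r_0^\a}\bigl(1-\tfrac{\a\Lambda r_0^\a}{2}\bigr)\Theta(M,x_0)$. Using $\Theta(M,x_0)\ge2$ and $\a\le1$ (so $1-\tfrac{\a\Lambda r_0^\a}{2}\ge1-\tfrac{\Lambda r_0^\a}{2}$) this produces $2-\e/2\ge2\,e^{-\Lambda r_0^\a}\bigl(1-\tfrac{\Lambda r_0^\a}{2}\bigr)$. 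Since the right-hand side tends to $2$ as $\Lambda r_0^\a\to0$, I would fix $\d=\d(\e)\in(0,1)$ so that $x<\d$ implies $e^{-x}\bigl(1-\tfrac x2\bigr)>1-\e/4$; then $\Lambda r_0^\a<\d$ makes the right-hand side strictly larger than $2-\e/2$, a contradiction. Hence $\Theta(M,x_0)<2$, and being a positive integer it equals $1$, so by Remark \ref{embrmk} the interior of $M$ is embedded.
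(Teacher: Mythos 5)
Your proposal is correct and follows essentially the same route as the paper: verify property (P) via H\"older's inequality together with a lower bound on $m(r)$ obtained by integrating the differential inequality from $\eqref{extmon2}$ (this is precisely the content of Lemma~\ref{inftyP}, Lemma~\ref{extmonLp1lemma} and Corollary~\ref{pP} in the Appendix, which the paper's proof simply cites), then combine Theorem~\ref{dencur} with Theorem~\ref{dencur0} and Remark~\ref{embrmk} and choose $\d(\e)$ so that the resulting bound on $\Theta(M,x_0)$ stays below $2$. The only differences are cosmetic: you phrase it as a contradiction, and you use the interior lower bound $m(0^+)\ge\pi$ where the paper's appendix uses the weaker $m(0^+)\ge\pi/2$ valid up to the boundary.
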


\begin{proof} 
Let $x_0\in M$. Since $\delta$ is less than one, then $M$ satisfies the property (P)  with $\Lambda=C(p)\|H\|_{L^p(M)}$ and $\alpha=1-2/p$, see Lemma~\ref{inftyP} and Corollary~\ref{pP} in the Appendix. Combining Theorems \ref{dencur0} and Theorem \ref{dencur} we get that
\begin{equation*}
\Theta(M, x_0)\le \frac{e^{\Lambda r_0^\a}}{1-\frac{\a\Lambda r_0^\a}{2}}\Theta(\CN_{x_0}\Gamma, x_0)\le
\frac{e^{\d}}{1-\frac{\a\d}{2}}\frac{\TC\Gamma}{2\pi}.
\end{equation*}

The  corollary then holds,  provided that:
\begin{equation}\label{dinfty}
\frac{e^\d}{2-\a\d}(4-\e)<2
\end{equation}
since this estimate implies that
\[\Theta(M, x_0)<2\]
(cf. Remark \ref{embrmk}).
\end{proof}

In order to obtain embeddedness at boundary points we need to recall another result in~\cite{eww1}.

\begin{theorem}\label{dencur0b} Let $\Gamma$ be a simple closed curve in $\R^n$ with finite total curvature and $x_0\in\Gamma$. Then
\[\length \Pi_{x_0}\Gamma\le\TC\Gamma-\pi-\theta\]
where
\[\Pi_{x_0}(x)=x_0+\frac{x-x_0}{|x-x_0|}\]
and $\theta$ is the exterior angle to $\Gamma$ at $x_0$.
\end{theorem}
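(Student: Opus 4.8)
The plan is to realize both sides as lengths of curves on the unit sphere and to compare them directly. I parametrize $\Gamma$ by arclength as $\gamma:[0,L]\to\R^n$ with $\gamma(0)=\gamma(L)=x_0$, and first treat the model case where $\gamma$ is $C^2$ on $(0,L)$ with its only corner at $x_0$. Writing $r(s)=|\gamma(s)-x_0|$, I let $u(s)=(\gamma(s)-x_0)/r(s)$ be the radial indicatrix and $T(s)=\gamma'(s)$ the tangent indicatrix, both curves on the unit sphere. Since $\Pi_{x_0}(\gamma(s))=x_0+u(s)$, we have $\length\Pi_{x_0}\Gamma=\int_0^L|u'|\,ds$, while $\TC\Gamma=\int_0^L|\gamma''|\,ds+\theta$, the last term being the contribution of the exterior angle at $x_0$ to the total curvature (the length of the geodesic arc closing up the tangent indicatrix at the corner). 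Thus it suffices to prove $\int_0^L|u'|\,ds\le\int_0^L|\gamma''|\,ds-\pi$.

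For the key comparison, let $\psi(s)\in[0,\pi]$ be the angle between $T(s)$ and $u(s)$. Decomposing $\gamma'=r'u+ru'$ gives $r'=\cos\psi$ and $r|u'|=\sin\psi$, so that $T=\cos\psi\,u+\sin\psi\,e$ with $e=u'/|u'|$ the unit tangent to the $u$-curve. Reparametrizing by the arclength $\phi$ of the $u$-curve (so $d\phi=|u'|\,ds$ and $du/d\phi=e$) and using $de/d\phi=-u+De/d\phi$ on the sphere, I would compute $dT/d\phi=(1+\psi_\phi)(-\sin\psi\,u+\cos\psi\,e)+\sin\psi\,De/d\phi$. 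The two summands are orthogonal, whence $|dT/d\phi|^2=(1+\psi_\phi)^2+\sin^2\psi\,|De/d\phi|^2\ge(1+\psi_\phi)^2$. Integrating, $\int_0^L|\gamma''|\,ds=\length(T)\ge\int(1+\psi_\phi)\,d\phi=\length\Pi_{x_0}\Gamma+\big(\psi_{\mathrm{end}}-\psi_{\mathrm{start}}\big)$. The boundary values are the crux: as $s\to0^+$ the chord $\gamma(s)-x_0$ aligns with $\gamma'(0^+)$, so $u\to T$ and $\psi_{\mathrm{start}}=0$; as $s\to L^-$ the chord aligns with $-\gamma'(L^-)$ while $T\to\gamma'(L^-)$, so $u$ and $T$ become antipodal and $\psi_{\mathrm{end}}=\pi$. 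This yields exactly $\int_0^L|\gamma''|\,ds\ge\length\Pi_{x_0}\Gamma+\pi$, which with $\TC\Gamma=\int_0^L|\gamma''|\,ds+\theta$ gives the claim.

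The remaining work is bookkeeping for the non-smooth situation. If $\gamma$ has interior corners with exterior angles $\theta_i$, I would run the computation on each smooth arc and telescope: the endpoint contributions collapse to $\psi_{\mathrm{end}}-\psi_{\mathrm{start}}-\sum_i\Delta\psi_i=\pi-\sum_i\Delta\psi_i$, and since each jump satisfies $|\Delta\psi_i|\le\theta_i$ (triangle inequality for angles on the sphere), one recovers $\length\Pi_{x_0}\Gamma\le\int_0^L|\gamma''|\,ds+\sum_i\theta_i-\pi=\TC\Gamma-\pi-\theta$. The main technical obstacle is that the reparametrization by $\phi$ degenerates precisely where $u'=0$, i.e. where $\gamma$ moves radially ($\psi\in\{0,\pi\}$) and the $u$-curve fails to be an immersion; there $e$ is undefined and $\psi$ need not be differentiable. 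I expect to handle this by working on the open set where $u'\neq0$, noting that its complement contributes no length to $\Pi_{x_0}\Gamma$, and letting the telescoped variation of the continuous function $\psi$ absorb those intervals; alternatively I would first establish the inequality for smooth curves with $u'\neq0$ and pass to the limit using lower semicontinuity of length together with the finiteness of $\TC\Gamma$. Simplicity of $\Gamma$ enters only to ensure that $x_0$ is a single point with a well-defined incoming and outgoing tangent, so that $\theta$ is unambiguous and $\gamma$ is a single arc from $x_0$ to $x_0$.
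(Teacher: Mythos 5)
First, note that the paper does not prove this statement: Theorem~\ref{dencur0b} is quoted verbatim from~\cite{eww1}, whose argument is integral--geometric --- one expresses $\length\Pi_{x_0}\Gamma$ by Crofton's formula as an average over unit vectors $e$ of the number of intersections of $\Gamma$ with hyperplanes through $x_0$ orthogonal to $e$, expresses $\TC\Gamma$ by Milnor's theorem as the corresponding average of the number of critical points of the height functions $x\mapsto e\cdot x$, and then does a careful count of how many critical points are ``used up'' by the crossing at $x_0$ itself, which is where the $-\pi-\theta$ comes from. Your proof is a genuinely different, direct comparison of the tangent indicatrix $T=\gamma'$ and the radial indicatrix $u=(\gamma-x_0)/|\gamma-x_0|$ on the sphere, and its core is correct: the decomposition $T=\cos\psi\,u+\sin\psi\,e$, the orthogonality giving $|dT/d\phi|^2=(1+\psi_\phi)^2+\sin^2\psi\,|De/d\phi|^2\ge(1+\psi_\phi)^2$, and the boundary values $\psi(0^+)=0$, $\psi(L^-)=\pi$ (the chord direction rotates by exactly $\pi$ relative to the tangent over the loop) are all right, and the circle with $x_0$ on it shows the resulting inequality is sharp, with equality throughout your chain. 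What your approach buys is transparency about where the $\pi$ comes from and a self-contained argument in the smooth category; what the integral--geometric route buys is that it works natively for arbitrary curves of finite total curvature (the counting argument is really a statement about polygons) and never sees the degeneracy of your reparametrization. That degeneracy is the one place where your sketch is thinner than it should be: on the closed set $N=\{u'=0\}$ one has $\psi\in\{0,\pi\}$, so each component $(a_i,b_i)$ of the complement contributes $\psi(b_i)-\psi(a_i)\in\{-\pi,0,\pi\}$, and the claim that these telescope to exactly $\pi$ requires an argument --- e.g.\ that $\psi$ is absolutely continuous with $\psi'=0$ a.e.\ on $N$ (using that a.e.\ point of $N$ is a density point of $\{\psi=0\}$ or of $\{\psi=\pi\}$), together with the bound $|\psi'|\le|T'|+|u'|$ that your computation already yields on the complement; cancellation between $+\pi$ and $-\pi$ crossing components is not ruled out by continuity alone without this. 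With that point made precise, and with the reduction from finite total curvature to the piecewise-$C^2$ case done by inscribed polygons (for which $\TC$ only decreases and length of the projection is lower semicontinuous), your argument is a complete and attractive alternative proof.
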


\begin{corollary}\label{smbdryemb}
Let $\Gamma$ be a $C^1$ closed curve in $\R^n$ with total curvature at most $(4-\e)\pi$, $\e\in(0,2]$. Let $M$ be a branched immersion in $\R^n$ with boundary $\Gamma$
and such that equation~\eqref{dvgthmeq} holds. If $H\in L^p(M)$ for some $p\in(2,\infty]$ then there exists $\d=\d(\e)\in (0,1)$ such that if
\[C(p)\|H\|_{L^p(M)} r_0^\a<\d\]
then $M$ is embedded up to and including the boundary.
Here $C(p)=1$ and $\alpha=1$ if $p=\infty$ and $C(p)=\frac{2p}{p-2}\left(\frac2\pi\right)^{1/p}$ and $\alpha=1-2/p$ if $p\in(2,\infty)$.\end{corollary}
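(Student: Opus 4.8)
The plan is to follow the scheme of Corollary~\ref{intemb} almost verbatim, reusing the interior argument and supplementing it with a density bound at boundary points. First I would invoke the appendix results (Lemma~\ref{inftyP} and Corollary~\ref{pP}) to guarantee that, under the hypothesis $C(p)\|H\|_{L^p(M)}r_0^\a<\d<1$, the immersion $M$ satisfies property~(P) with $\Lambda=C(p)\|H\|_{L^p(M)}$ and the stated value of $\a$. Interior embeddedness is then exactly the content of Corollary~\ref{intemb}, so it remains only to control $\Theta(M,x_0)$ for $x_0\in\Gamma$ and to verify there the boundary embeddedness criterion $\Theta(M,x_0)<\frac32$ from Remark~\ref{embrmk}.

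For a boundary point $x_0\in\Gamma$, Theorem~\ref{dencur} remains valid: its proof treats this case, since the cone term $\area(E\cap B(r))/r^2\to0$ as $r\to0$ because $\Gamma$ is $C^1$. I would therefore rearrange~\eqref{dencurLinfty} into $\Theta(M,x_0)\le\frac{e^{\Lambda r_0^\a}}{1-\frac{\a\Lambda r_0^\a}{2}}\Theta(\CN_{x_0}\Gamma,x_0)$, which is legitimate because $\frac{\a\Lambda r_0^\a}{2}<\frac{\d}{2}<\frac12$. The key new ingredient is Theorem~\ref{dencur0b}: because $\Gamma$ is $C^1$, its exterior angle vanishes, $\theta=0$, so $\length\Pi_{x_0}\Gamma\le\TC\Gamma-\pi$. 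The self-similarity of the cone about its vertex, exactly as in~\eqref{conedensity}, gives $\Theta(\CN_{x_0}\Gamma,x_0)=\length\Pi_{x_0}\Gamma/(2\pi)\le(\TC\Gamma-\pi)/(2\pi)\le(3-\e)/2$. Combining this with the rearranged inequality from Theorem~\ref{dencur} and the assumption $\Lambda r_0^\a<\d$ yields $\Theta(M,x_0)\le\frac{e^\d}{2-\a\d}(3-\e)$.

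Finally I would verify that the very same threshold $\d=\d(\e)$ chosen for the interior case suffices at the boundary. Condition~\eqref{dinfty}, namely $\frac{e^\d}{2-\a\d}(4-\e)<2$, together with the elementary inequality $\frac{3-\e}{4-\e}\le\frac34$ valid for $\e\ge0$, gives $\frac{e^\d}{2-\a\d}(3-\e)=\frac{3-\e}{4-\e}\cdot\frac{e^\d}{2-\a\d}(4-\e)<\frac34\cdot2=\frac32$. Hence $\Theta(M,x_0)<\frac32$ at every boundary point, which by Remark~\ref{embrmk} forces $M$ to be embedded at $x_0$; together with interior embeddedness this proves that $M$ is embedded up to and including the boundary. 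The only genuinely delicate points are confirming that Theorem~\ref{dencur} applies unchanged when $x_0\in\Gamma$ and that the cone-density identity persists for a vertex lying on $\Gamma$; both reduce to the $C^1$ regularity of $\Gamma$, which renders the boundary contribution to $m(r)$ negligible in the limit. Everything else is the interior computation with $(4-\e)$ replaced by $(3-\e)$ and the embeddedness threshold $2$ replaced by $\frac32$.
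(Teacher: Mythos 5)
Your argument is correct and follows essentially the same route as the paper's proof: reduce to Corollary~\ref{intemb} for interior points, then at $x_0\in\Gamma$ combine Theorem~\ref{dencur0b} (with vanishing exterior angle, so $\Theta(\CN_{x_0}\Gamma,x_0)\le\frac{\TC\Gamma}{2\pi}-\frac12$) with Theorem~\ref{dencur} to get $\Theta(M,x_0)\le\frac{e^\d}{2-\a\d}(3-\e)<\frac32$ and invoke Remark~\ref{embrmk}. Your explicit check that the interior threshold \eqref{dinfty} already implies the boundary condition \eqref{dinftybd}, via $\frac{3-\e}{4-\e}\le\frac34$, is a small clarification the paper leaves implicit, but it does not change the argument.
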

\begin{proof}
By Corollary~\ref{intemb}, it suffices to prove that if $\d$ is sufficiently small, then $M$ is embedded at boundary points. Let $x_0\in\Gamma$, then using Theorems \ref{dencur0b} (with $\theta=\pi$, since $\Gamma$ is $C^1$) and Theorem \ref{dencur} we get
\begin{equation*}
\Theta(M, x_0)\le \frac{e^{\Lambda r_0^\a}}{1-\frac{\a\Lambda r_0^\a}{2}}\Theta(\CN_{x_0}\Gamma, x_0)\le
\frac{e^{\Lambda r_0^\a}}{1-\frac{\a\Lambda r_0^\a}{2}}\left(\frac{\TC\Gamma}{2\pi}-\frac12\right).
\end{equation*}

Therefore  the corollary holds, provided that $\e$ is small enough to satisfy the following
\begin{equation}\label{dinftybd}
\frac{e^\d}{2-\a\d}(3-\e)<\frac32
\end{equation}
 since this estimate implies that
\[\Theta(M, x_0)<\frac32\]
 (cf. Remark \ref{embrmk}).
\end{proof}

As in \cite{eww1}, our results extend to the case when the boundary $\Gamma$ is piecewise $C^1$.
In particular the extended monotonicity formula \eqref{extmon2} and thus Theorems \ref{dencur}, \ref{intemb} and \ref{smbdryemb} hold with the $C^1$ condition on the boundary $\Gamma$ replaced by the weaker condition of being piecewise $C^1$.

\begin{theorem}\label{corbdryemb}
Let $\Gamma$ be a $C^1$ closed curve in $\R^n$ with total curvature at most $(4-\e)\pi$, $\e\in(0,2]$. Let $M$ be a branched immersion in $\R^n$ with boundary $\Gamma$ and such that equation~\eqref{dvgthmeq} holds.  If $H\in L^p(M)$ for some $p\in(2,\infty]$ then there exists $\d=\d(\e)\in (0,1)$ such that if
\[C(p)\|H\|_{L^p(M)} r_0^\a<\d\]
then the following holds.
\begin{itemize}
\item[1.] If $x_0$ is a point on $\Gamma$ with exterior angle $\theta$, then the density $\Theta(M,x_0)$ is either $\frac12+\frac{\theta}{2\pi}$ or $\frac12-\frac{\theta}{2\pi}$
\item[2.] If $x_0$ is a cusp ($\theta=\pi$) then $\Theta(M,x_0)=0$, unless $\Gamma$ lies in a plane
\item[3.]$M$ is embedded up to and including the boundary.
\end{itemize}
Here $C(p)=1$ and $\alpha=1$ if $p=\infty$ and $C(p)=\frac{2p}{p-2}\left(\frac2\pi\right)^{1/p}$ and $\alpha=1-2/p$ if $p\in(2,\infty)$.
\end{theorem}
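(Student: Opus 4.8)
The plan is to combine the upper density bound coming from the earlier results with a quantization statement for the density at a boundary corner, in the spirit of Remark~\ref{embrmk}. Since, by the paragraph preceding the theorem, the extended monotonicity formula~\eqref{extmon2} — and hence Theorems~\ref{dencur} and~\ref{dencur0b} — continue to hold when $\Gamma$ is merely piecewise $C^1$, I may apply them at a point $x_0\in\Gamma$ that is a corner with exterior angle $\theta$. Feeding the cone-density estimate~\eqref{conedensity}, now in the sharper form of Theorem~\ref{dencur0b} (which accounts for the corner through the term $-\pi-\theta$), into Theorem~\ref{dencur} gives, exactly as in the proof of Corollary~\ref{smbdryemb} but keeping $\theta$ explicit,
\begin{equation*}
\Theta(M,x_0)\le \frac{e^{\Lambda r_0^\a}}{1-\frac{\a\Lambda r_0^\a}{2}}\left(\frac{\TC\Gamma}{2\pi}-\frac12-\frac{\theta}{2\pi}\right)\le \frac{e^{\d}}{1-\frac{\a\d}{2}}\left(\frac{3-\e}{2}-\frac{\theta}{2\pi}\right).
\end{equation*}

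The first structural step is to record the admissible values of $\Theta(M,x_0)$ at a corner. Blowing up at $x_0$, the tangent cone is the sum of an integer number of planes through $x_0$ (the interior sheets) together with a single boundary wedge, the latter because $u|_\Gamma$ is one-to-one and so exactly one boundary arc passes through $x_0$. The two tangent rays of $\Gamma$ at the corner cut the tangent plane into sectors of opening $\pi-\theta$ and $\pi+\theta$, so an unbranched boundary sheet contributes density $\frac12-\frac{\theta}{2\pi}$ or $\frac12+\frac{\theta}{2\pi}$, while any branch point or additional interior sheet raises the density to at least $\frac32-\frac{\theta}{2\pi}$. Hence
\[
\Theta(M,x_0)\in\left\{\tfrac12-\tfrac{\theta}{2\pi},\ \tfrac12+\tfrac{\theta}{2\pi}\right\}\cup\left[\tfrac32-\tfrac{\theta}{2\pi},\infty\right),
\]
the gap being genuine precisely when $\theta<\pi$.

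Now I would choose $\d=\d(\e)$ small enough that the displayed upper bound is strictly below $\frac32-\frac{\theta}{2\pi}$; since as $\d\to0$ the prefactor tends to $1$ and $\frac{3-\e}{2}-\frac{\theta}{2\pi}<\frac32-\frac{\theta}{2\pi}$, this can be arranged uniformly in $\theta\in[0,\pi]$ (the worst case being $\theta=0$). Combined with the quantization, this forces $\Theta(M,x_0)\in\{\frac12-\frac{\theta}{2\pi},\,\frac12+\frac{\theta}{2\pi}\}$, which is conclusion~1. Conclusion~3 then follows at once: attaining one of the two single-sheet values means the tangent cone is a single wedge, so $M$ is locally one embedded sheet at $x_0$; together with interior embeddedness (Corollary~\ref{intemb}) and the smooth-boundary case, $M$ is embedded up to and including the boundary. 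Conclusion~2 is the specialization $\theta=\pi$, where the two admissible single-sheet values degenerate to $0$ and $1$ and the upper bound reads $\Theta(M,x_0)\le\frac{e^\d}{1-\a\d/2}\bigl(1-\frac{\e}{2}\bigr)<1$, leaving $\Theta(M,x_0)=0$.

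The main obstacle — and the only place the geometry of the cusp genuinely enters — is the planar exception in conclusion~2. The value $\Theta(M,x_0)=1$ at a cusp corresponds to a full-plane tangent cone, i.e.\ a slit-disc configuration in which the surface fills an entire neighborhood with $\Gamma$ entering as a boundary slit; such a cone is necessarily planar. I expect the delicate point to be showing that this configuration saturates the total-curvature budget, forcing $\TC\Gamma$ to the critical value $4\pi$, so that the strict hypothesis $\TC\Gamma<4\pi$ excludes it and pins the density at $0$, while the borderline planar slit-disc accounts for the stated exception. Making the blow-up/tangent-cone description rigorous for a branched immersion at a boundary corner, and verifying the full density gap $\bigl[\frac32-\frac{\theta}{2\pi},\infty\bigr)$ rather than merely the next few discrete values, is the technical heart of the argument.
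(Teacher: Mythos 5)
Your argument is essentially the paper's: the authors give no independent proof but defer to the proof of Theorem 4.1 in \cite{eww1} with Theorem~\ref{dencur} substituted for the minimal-surface density estimate, and that is precisely what you reconstruct (corner version of the cone-density bound from Theorem~\ref{dencur0b}, fed into Theorem~\ref{dencur}, combined with the quantization $\Theta(M,x_0)\in\{k+\tfrac12\pm\tfrac{\theta}{2\pi}\}$ at a corner). The only miscalibration is your final paragraph: under the standing hypothesis $\TC\Gamma\le(4-\e)\pi$ your own cusp bound $\frac{e^{\d}}{1-\a\d/2}\left(1-\frac{\e}{2}\right)<1$ already forces $\Theta(M,x_0)=0$, so the planar slit-disc exception is vacuous here (it only arises at the borderline value $4\pi$ treated in \cite{eww1}) and requires no further work.
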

We skip the proof since it is identical with that of the minimal case (cf. \cite[Theorem 4.1]{eww1}) with the difference that instead of Theorem 1.3 in \cite{eww1} we have to use Theorem \ref{dencur} here.

 We note that in all of the previous results, the proofs provide a specific value for $\d=\d(\e)$.

\section{Applications}\label{appl}

When $n=3$, using the density estimates we can prove a curvature estimate which is a generalization of Theorem 0.1 in~\cite{tin3}.

We begin by introducing a few definitions.
\begin{definition}\label{c1a convergence}
 Let $\Gamma\subset \R^{n}$ be a simple closed $C^{2,\a}$ curve, 
 i.e  for each $x\in \Gamma$ there exists an $r=r(x)>0$ such that
$$
M\cap B_{r}(x) = \graph u_{x}\cap  B_{r}(x),
$$
where $u_{x}\in C^{2,\alpha}((x+L_{x})\cap \overline B_{r}(x);L_{x}^{\perp})$ for some
$1$-dimensional subspace $L_{x}$ of $\R^{n}$ and where $\graph
u_{x}=\{\xi+u_{x}(\xi):\xi\in (x+L_{x})\cap  \ov B_{r}(x)\}$.\\
Let
\[\kappa(\Gamma,r,x) = \inf\|u_x\|^*_{2,\a,(x+L_x)\cap B_r(x)}\]
where the infimum is taken over all choices of subspaces $L_{x}$ and corresponding representing functions $u_{x}$ and where for a $C^{2,\a}$ function $u: (-r,r)\to \R^{n-1}$, $\|u\|^*_{2,\a, (-r,r)}$ denotes the scale invariant $C^{2,\a}$ norm of $u$, i.e.
\[\begin{split}\|u\|^*_{2,\a, (-r,r)}=\frac{1}{r}\sup_{y\in (-r,r)}|u(y)|&+\sup_{y\in (-r,r)}|Du(y)|
+r\sup_{y\in (-r,r)}|D^2u(y)|\\
&+r^{1+\a}\sup_{y,z\in (-r,r), y\ne z}\frac{|D^2u(y)-D^2u(z)|}{|y-z|^\a}.\end{split}\]
The  quantity $\kappa(\Gamma,r,x)$ is a way to quantify the regularity of $\Gamma$ around $x$.
\end{definition}
\begin{definition}
 Let $\Gamma\subset \R^{n}$ be a $C^{2,\a}$ simple closed curve. 
 For each $x\in\Gamma$ let
 \[{\r}(x)=\sup\{r:\k(\Gamma,r,x)\le1\}.\]
Finally we define $\r=\r(\Gamma)$ by
 \[\r(\Gamma)=\min\{{\r}(x):x\in\Gamma\}.\]
 Note that for each $x\in \Gamma, \r(x)>0$ and hence $\r(\Gamma)>0$.
\end{definition}
In particular, for any $x\in\Gamma$, $B_{\r(\Gamma)}(x)\cap\Gamma$ can be written as a graph with gradient less than one. Note that if $\r(\Gamma)=\infty$ then $\Gamma$ is a straight line. This is because \[r{|D^2u(x)|}\le 1,\, \forall r \implies |D^2u(x)|=0,\]

\begin{definition}\label{Pea} Given $\e\in(0,2]$, $\a\in(0,1]$ we say that $(\Gamma, M)$ are in the $P(\e,\a)$ class, if the following hold:
\begin{itemize}
\item [(i)] $\Gamma$ is a simple, closed $C^{1,\a}$ curve in $\R^n$ with total curvature at most $(4-\e)\pi$.
\item[(ii)] $\partial M=\Gamma$.
\item [(iii)] $M$ is a branched immersion having mean curvature $H\in L^p(M)$, where $p=2/(1-\a)$.
\item [(iv)] $\Lambda r_0^\a<\d$, where $r_0= r_0(M)$ the diameter of $M$ and
\begin{itemize}
\item If $\a=1\Leftrightarrow p=\infty$, $\Lambda=\|H\|_{L^\infty(M)}$
\item If $\a\in(0,1)\Leftrightarrow p\in(2,\infty)$, $\Lambda=\frac{2p}{p-2}\left(\frac2\pi\right)^{1/p}\|H\|_{L^p(M)}$
\end{itemize}
and $\d=\d(\e)\in(0,1)$ is such that
\[\frac{e^\d}{1-\d}<\frac{4}{4-\e}.\]
\end{itemize}
\end{definition}

Note that if $(\Gamma, M)\in P(\e,\a)$, then equation~\eqref{dvgthmeq} holds (see Remark \ref{dvgthmrmk}), $M$ satisfies the property  (P)  and Theorems \ref{dencur}, \ref{intemb},  \ref{smbdryemb} hold, so that $M$ is embedded and $C^{1,\a}$ up to and including the boundary. Finally the area ratio bounds given in Remark \ref{massupbd} hold.

\begin{theorem}\label{mainthm}
 Given $\lambda>0$, $\gamma>0$ and $\a\in(0,1]$ there exists a constant  $C=C(\lambda,\gamma,\a)$  such that the following holds.

Let $\Gamma$ be a $C^{2,\alpha}$ closed curve in $\R^3$ and let $M$ be an orientable branched immersion in $\R^3$ with boundary $\Gamma$.
If $(\Gamma,M)\in P(\e,\alpha)$, for some $\e\in(0,2]$, and, in addition, $\r(\Gamma)/r_0\ge\gamma$ and the mean curvature $H$ of $M$ is in $C^{0,\a}(M)$ satisfying $r_0^{1+\a}[H]_{C^{0,\alpha}(M)}\le\lambda$, then
$$\sup_{M}r_0|A|\leq C,$$
where $|A|$ is the norm of the second fundamental form.
\end{theorem}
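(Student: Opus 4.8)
The plan is to argue by contradiction with a blow-up (point-selection) argument in the spirit of the minimal case in \cite{tin3}, with the density estimates of Section~\ref{denestsec} supplying the compactness and rigidity that embeddedness provides there. Suppose the asserted bound fails for the given $\lambda,\gamma,\a$. Then there is a sequence $(\Gamma_j,M_j)\in P(\e,\a)$ with $\r(\Gamma_j)/r_0^{(j)}\ge\gamma$ and $(r_0^{(j)})^{1+\a}[H_j]_{C^{0,\a}(M_j)}\le\lambda$, yet $\sup_{M_j}r_0^{(j)}|A_j|\to\infty$. Since every hypothesis is scale invariant, I would rescale so that $r_0^{(j)}=1$; then $Q_j:=\sup_{M_j}|A_j|\to\infty$. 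A standard point-selection argument (as in \cite{tin3}) produces points $x_j\in M_j$ such that the translated and dilated surfaces $\wt M_j:=Q_j\,(M_j-x_j)$ satisfy $|\wt A_j|\le 2$ on intrinsic balls of radius $R_j\to\infty$ about the marked point, with $|\wt A_j|=1$ there. The goal is then to extract a subsequential limit $\wt M_\infty$ carrying a point where $|A|=1$, and to contradict this using the density bounds, which force $\wt M_\infty$ to be flat.

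For the compactness step I would first upgrade regularity. By Remark~\ref{dvgthmrmk} each $M_j$ is $C^{1,\beta}$ up to $\Gamma_j$, and since $H_j\in C^{0,\a}(M_j)$ with $[H_j]_{C^{0,\a}}\le\lambda$, interior and boundary Schauder estimates for the minimal surface system with right-hand side $H_j$ give uniform local $C^{2,\a}$ bounds, in the interior and up to the boundary; embeddedness (Corollaries~\ref{intemb} and~\ref{smbdryemb}) together with the $C^{1,\beta}$ control provides the graphical representations at a controlled scale on which Schauder applies. Under the dilation by $Q_j$ the rescaled mean curvature satisfies $\|\wt H_j\|_{C^{0,\a}}\to0$, since its $L^\infty$ norm is bounded (a consequence of the bounded $L^p$ norm and bounded Hölder seminorm on a set of controlled area, the latter from the area ratio bounds of Remark~\ref{massupbd}) and its seminorm scales by $Q_j^{-(1+\a)}$. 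Combining the uniform curvature bound, the local area bounds from Remark~\ref{massupbd}, and Arzel\`a--Ascoli yields a subsequential $C^2_{\loc}$ limit $\wt M_\infty$, an orientable \emph{minimal} surface (possibly with boundary) satisfying $|A_\infty|\le 2$, $|A_\infty|=1$ at the marked point, and area ratios bounded above by a constant strictly less than $2$ at every interior point and scale (coming from $\TC\Gamma_j\le(4-\e)\pi$ via Remark~\ref{massupbd}).

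Two cases arise according to the rescaled boundary distance $d_j:=Q_j\,\dist(x_j,\Gamma_j)$. If $d_j\to\infty$ (interior blow-up) then $\wt M_\infty$ is a complete embedded minimal surface without boundary whose density ratio is bounded by a constant $<2$ at every scale; as in \cite{tin3} this density bound forces $\wt M_\infty$ to be a plane, contradicting $|A_\infty|=1$. If $d_j$ stays bounded (boundary blow-up) then the hypothesis $\r(\Gamma_j)/r_0^{(j)}\ge\gamma$ is precisely what guarantees that $\wt\Gamma_j:=Q_j(\Gamma_j-x_j)$ has pointwise curvature $\le C/(\gamma Q_j)\to0$, so that, using the boundary $C^{2,\a}$ estimates, $\wt\Gamma_j$ converges to a straight line $\ell$ through the marked point. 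Then $\wt M_\infty$ is a minimal surface with straight boundary $\ell$, embedded up to and including $\ell$ by Corollary~\ref{smbdryemb} with boundary density $\tfrac12$. Schwarz reflection across $\ell$ produces a complete embedded minimal surface $\wt M_\infty\cup\refl_\ell\wt M_\infty$ with bounded area ratio and containing the line $\ell$; the same rigidity used in the interior case, applied to the reflected surface, forces it to be a plane, again contradicting $|A_\infty|=1$.

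The main obstacle is the boundary case. Making it rigorous requires (i) extracting the limit \emph{up to the boundary}, which depends on the uniform boundary $C^{2,\a}$ estimates and on the compactness of the rescaled boundary curves; (ii) verifying that $\r(\Gamma_j)/r_0^{(j)}\ge\gamma$ indeed forces $\wt\Gamma_j$ to converge to a single straight line $\ell$ (rather than degenerating or splitting), which is exactly where the quantitative flatness encoded in $\r(\Gamma)$ enters; and (iii) justifying the reflection principle for the limit and confirming that the reflected surface inherits an area ratio bound strong enough to rule out every complete embedded minimal surface other than the plane (for instance eliminating the catenoid, which contains no line, and eliminating superquadratic-growth examples via the area bound). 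The interior case is comparatively routine given the density estimates. Finally, one checks that the constant $C$ produced depends only on $(\lambda,\gamma,\a)$, which is automatic from the contradiction setup since none of the blow-up quantities depend on more than these parameters.
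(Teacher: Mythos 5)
Your overall architecture --- contradiction, rescaling by the curvature at a chosen point, Schauder compactness to a minimal limit, and rigidity via the density bounds --- is the same as the paper's. One structural remark: the paper does not need a point-selection lemma. Since each $M_n$ is $C^2$ up to and including the boundary (by the $W^{2,p}$ and Schauder estimates), $|A_n|$ attains its maximum at some $p_n\in\ov M_n$, and rescaling about that point gives $\sup_{M'_n}|A'_n|\le 1$ with equality at the origin; this treats interior and boundary points uniformly and avoids the delicacy of point-picking near $\Gamma_n$.

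The genuine gap is in your boundary case. Your closing step, that ``the same rigidity used in the interior case, applied to the reflected surface, forces it to be a plane,'' does not go through as stated. The interior rigidity is ``area ratio $<2\pi$ at every point and scale implies plane.'' But at a boundary point $x_0\in\ell$ the bound you inherit from Remark \ref{massupbd} together with Theorem \ref{dencur0b} is $m(r)<\tfrac{3\pi}{2}$ (since the cone density there is at most $\TC\Gamma/2\pi-\tfrac12<\tfrac32$), so after Schwarz reflection the area ratios of $\wt M_\infty\cup\refl_\ell\wt M_\infty$ are only controlled by $3\pi$, i.e. density $<3$, and the ``density $<2$'' rigidity is lost. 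To exclude nonflat limits you would then need, as you yourself flag, something like Schoen's classification of two-ended complete embedded minimal surfaces of finite total curvature, together with arguments that the reflected surface is embedded and has quadratic area growth hence finite total curvature --- none of which you supply. The paper avoids reflection altogether: it keeps the half-space picture, observes that $\r(\Gamma'_n)=|A_n(p_n)|\,\r(\Gamma_n)\ge\gamma\,|A_n(p_n)|\,r_{0,n}\to\infty$ forces the limit boundary to be a straight line, and then takes the cone at infinity of $\wt M$ itself, using the boundary density bound $<\tfrac32$ and arguing exactly as in \cite{tin3} to conclude that $\wt M$ is a plane or a half-plane, contradicting $|A|=1$ at the origin. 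Either switch to that cone-at-infinity argument or fill in the two-ended classification and the embeddedness of the reflected surface; as written, the boundary case is not closed.
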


In the above theorem $[H]_{C^{0,\alpha}(M)}$, denotes the $\a$-seminorm of $H$, i.e.
\[[H]_{C^{0,\alpha}(M)}=\sup_{x,y\in M, x\ne y}\frac{|H(x)-H(y)|}{|x-y|^\a}.\]

\begin{proof}
The hypotheses on $\Gamma$ and $H$ along with standard PDE estimates ($W^{2,p}$ estimates, cf. \cite[Theorem 9.15]{gt1} and the Schauder estimates) imply that $M$ is $C^2$ up to and including the boundary (in particular M in $C^{2,\beta}$ for any $\beta<\a$). This implies that for a given surface, $|A|$ is bounded.
With this theorem we are showing that the bound does not depend on the surface but only on $\lambda,\a $ and $\gamma$.

Arguing by contradiction, suppose that the statement is false. Then we can find a sequence of $C^{2,\alpha}$ connected simple closed curves $\Gamma_n\subset \mathbb{R}^3$   with total curvature at most $(4-\e_n)\pi$, $\e_n\in(0,2]$, and surfaces $M_n$ with diameter $r_0(M_n)= r_{0,n}$ and mean curvature $H_n\in C^{0,\a}(M_n)$ such that the following hold.
 The pairs $(\Gamma_n, M_n)\in P(\e,\a)$, in particular $\partial M_n=\Gamma_n$ and $\Lambda_n r_{0,n}^\a<\d(\e_n)<1$. Here $\Lambda_n$, $\d(\e_n)$ are the corresponding $\Lambda$ and $\d$ as in Definition \ref{Pea}. Moreover $\r(\Gamma_n)/r_{0,n}\ge\gamma$, $r_{0,n}^{1+\a}[H_n]_{C^{0,\alpha}(M_n)}\le\lambda$ and there exists
$p_n \in M_n$ such that
$$r_{0,n}|A_n(p_n)|=r_{0,n}\max_{\ov M_n}|A_n|> n,$$

We consider the sequence of surfaces $M_n'= \eta_{p_n,|A(p_n)|}(M_n)$ (where $\eta_{x,\l}(y)=\l(y-x) $ that is a rescaling and a translation). Note that for these new surfaces we have the following:
Their second fundamental forms are uniformly bounded, namely
\[
\sup_{M_n'} |A'_n|\le|A_n(p_n)|^{-1}\sup_{M_n} |A_n|= 1
\]
 and their mean curvatures satisfy
 \[[H'_n]_{0,\a}=|A_n(p_n)|^{-(1+\a)}[H_n]_{0,\a}\le\l(r_{0,n}|A_n(p_n)|)^{-(1+\a)}\to 0\]
 and
 \[\|H'_n\|_{L^p(M_n')}=\frac{ r_{0,n}^{1-2/p}}{(|A_n(p_n)| r_{0,n})^{1-2/p}}\|H_n\|_{L^p(M_n)}<\d(\e_n)(r_{0,n}|A_n(p_n)|)^{-\a}\to 0.\]

 A standard compactness argument, using Schauder estimates, shows that a subsequence of  $M'_n$ converges $C^2$  to an orientable properly embedded minimal surface $\widetilde{M}$ such that
 \begin{itemize}
 \item[(i)]  if ${\dist}_{M_n}(\Gamma, p_n)|A_n|\to\infty$,
 then $\wt{M}$ is complete, or
 \item[(ii)]  if ${\dist}_{M_n}(\Gamma, p_n)|A_n|$ is bounded, then $\wt{M}$ has a boundary $B$.
\end{itemize}
Moreover if case (ii) holds then $B$ is a straight line, since for $\Gamma'_n=\partial M'_n$ we have that
\[\r(\Gamma'_n)=|A_n(p_n)|\r(\Gamma_n)\ge\gamma |A_n(p_n)| r_{0,n}\to\infty.\]

 Using the area estimates derived in Theorem \ref{dencur} (see also Remark \ref{massupbd}) we can take the cone at infinity of $\wt{M}$ and arguing exactly as in~\cite[Theorem 0.1]{tin3} we can conclude that $\widetilde{M}$ is either a plane or a  half-plane. However, the norm of the second fundamental form of $\widetilde{M}$  has value one at the origin, in particular $\widetilde{M}$ is neither a plane nor a  half-plane. This gives a contradiction and proves the estimate.
  \end{proof}

In the following lemma the dimension n, of the ambient space $\R^n$, need not be equal to 3.

\begin{lemma}\label{genusthm}
Given $\Delta>0$ there exists a constant  $N=N( \Delta)$  such that the following holds. If $(\Gamma,M)\in P(\e,\a)$ and $M$ is $C^2$ up to and including the boundary with $r_0\sup_M|A|<\Delta$, then
$M$ has genus less than $N$.\end{lemma}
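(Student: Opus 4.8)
The plan is to bound the Euler characteristic of $M$ by the Gauss--Bonnet theorem and then convert this into a genus bound. Since $(\Gamma,M)\in P(\e,\a)$, the density estimates of Section~\ref{denestsec} apply, so $M$ is embedded; in particular it carries no interior branch points (these would force integer density $\ge 2$, contradicting density $1$ at embedded interior points), and by hypothesis $M$ is $C^2$ up to and including the boundary. Hence $M$ is a genuine compact $C^2$ surface with $C^2$ boundary $\Gamma=\partial M$, and the classical Gauss--Bonnet formula
\[
\int_M K\,dA+\int_\Gamma k_g\,ds=2\pi\,\chi(M)
\]
holds, where $K$ is the intrinsic Gaussian curvature and $k_g$ the geodesic curvature of $\Gamma$ in $M$. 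I would bound each integral by a constant depending only on $\Delta$ (and the universal bound $\TC\Gamma\le 4\pi$), bound $|\chi(M)|$, and then use $\chi(M)=2-2g-b$ with $b=1$.

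First I would establish the uniform area bound $\area(M)\le C_0\,r_0^2$. This comes directly from the area-ratio estimate of Remark~\ref{massupbd}: fixing $x_0\in M$ and taking $r=r_0$ in \eqref{dencurLinftym}, the factor $e^{-\Lambda(r_0^\a-r^\a)}$ equals $1$, the bound $\Lambda r_0^\a<\d<1$ gives $1-\tfrac{\a\Lambda r_0^\a}{2}>\tfrac12$, and the cone density obeys $\Theta(\CN_{x_0}\Gamma,x_0)\le \TC\Gamma/2\pi\le 2$ by \eqref{conedensity}. Thus $m(r_0)\le 4\pi$. Since $m(r)\ge\area(M\cap B(r))/r^2$ and $M\subset\overline{B_{r_0}(x_0)}$, letting $r\uparrow r_0$ yields $\area(M)\le C_0 r_0^2$ with $C_0$ universal.

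Next I would control the two integrals. The Gauss equation for a surface in $\R^n$ gives the pointwise bound $|K|\le|A|^2$, so
\[
\left|\int_M K\,dA\right|\le \sup_M|A|^2\cdot\area(M)\le C_0\bigl(r_0\sup_M|A|\bigr)^2\le C_0\Delta^2 .
\]
For the boundary term, since $\Gamma$ is $C^2$ its curvature vector $\vec k_\Gamma=\nabla_T T$ (with $T$ the unit tangent) is defined and continuous, and decomposes orthogonally as $\nabla_T T=\nabla^M_T T+A(T,T)$, the first piece tangent to $M$ and the second normal to it. Hence $|k_g|=|\nabla^M_T T|\le|\vec k_\Gamma|$ pointwise, and integrating gives $\bigl|\int_\Gamma k_g\,ds\bigr|\le\int_\Gamma|\vec k_\Gamma|\,ds=\TC\Gamma\le 4\pi$.

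Combining the three displays, $2\pi|\chi(M)|\le C_0\Delta^2+4\pi$, so $|\chi(M)|$ is bounded by a constant depending only on $\Delta$. Because $\Gamma$ is a single simple closed curve, $M$ has exactly one boundary component, so (for the orientable surfaces under consideration) $\chi(M)=1-2g$, giving $g\le\frac{1}{2}\bigl(1+|\chi(M)|\bigr)=:N(\Delta)$. I expect the main obstacle to be the area bound rather than the Gauss--Bonnet bookkeeping: everything downstream is a clean application of the Gauss equation and the total-curvature bound, but the area estimate must be extracted uniformly from the monotonicity machinery of Section~\ref{denestsec}, and one must verify that the density estimates genuinely supply embeddedness so that the smooth Gauss--Bonnet theorem is available.
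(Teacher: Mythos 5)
Your proposal is correct and follows essentially the same route as the paper: Gauss--Bonnet, bounding the boundary term by $\TC\Gamma\le 4\pi$, and bounding $\int_M K$ via the Gauss equation together with the hypothesis $r_0\sup_M|A|<\Delta$ and the area-ratio estimate of Remark~\ref{massupbd}. The only cosmetic differences are that you isolate the area bound $\area(M)\le C_0r_0^2$ as a separate step and use $|K|\le|A|^2$ directly, where the paper writes $|K|\le\tfrac12(|A|^2+|H|^2)$ and then invokes $|H|^2\le 2|A|^2$.
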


\begin{proof}
The Gauss-Bonnet Theorem states that $$\int_{\Gamma} \vec{k}\cdot\vec{\nu}_Mds+\int_{M}K_M=2\pi\chi(M),$$ where $\vec{k}$ is the curvature vector of the curve $\Gamma$, $\vec{\nu}_M$ is the exterior normal of $M$, $K_M$ the Gauss curvature and $\chi(M)$ is the Euler characteristic of $M$. The first integral is bounded in absolute value by the total curvature of $\Gamma$. By the Gauss equation, the second integral can be bounded as follows.
\[
\left|\int_{M}K_M \right|\leq \frac12 \left(\int_{M} |A|^2 +\int_{M}H^2\right).
\]
Note that $|H|^2\le 2|A|^2$. Using the assumption on $|A|$ and Theorem \ref{dencur}, in particular Remark \ref{massupbd} we get
\[\begin{split}\frac12\int_M|H|^2+ |A|^2&\le\frac32\int_{M\cap B(r_0)}|A|^2\le \Delta^2 r_0^{-2}m(r_0) r_0^2\\
&\le \frac32\frac{\Delta^2 \pi}{1-\frac{\a\Lambda r_0^\a}{2}} \Theta(\CN_{x_0}\Gamma, x_0)
\le\frac{3\Delta^2 \pi}{2-\a\d}\frac{4-\e}{2}\leq 3\pi \Delta^2.
\end{split}\]
  Hence $|\chi(M)|$ is bounded and so is the genus. Note that $\a\d<1$ and $\epsilon\leq 2$.
  \end{proof}

\begin{corollary}
 Given $\lambda>0$, $\gamma>0$ and $\a\in(0,1]$ there exists a constant  $N=N(\lambda,\gamma,\a)$  such that the following holds.

Let $\Gamma$ be a $C^{2,\alpha}$ closed curve in $\R^3$ and let $M$ be an orientable branched immersion in $\R^3$ with boundary $\Gamma$.
If $(\Gamma,M)\in P(\e,\alpha)$, for some $\e\in(0,2]$, and, in addition, $\r(\Gamma)/r_0\ge\gamma$ and the mean curvature $H$ of $M$ is in $C^{0,\a}(M)$ satisfying $r_0^{1+\a}[H]_{C^{0,\alpha}(M)}\le\lambda$, then $M$ has genus less than $N$.
\end{corollary}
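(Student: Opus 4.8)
The plan is to combine the curvature estimate of Theorem~\ref{mainthm} with the genus bound of Lemma~\ref{genusthm}; no new analysis is required, only a bookkeeping of how the constants feed into one another.

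First I would invoke Theorem~\ref{mainthm} with the very same data $(\lambda,\gamma,\a)$. Its hypotheses coincide word-for-word with those of the present statement: $\Gamma$ is a $C^{2,\a}$ closed curve in $\R^3$, $M$ is an orientable branched immersion with $\partial M=\Gamma$, $(\Gamma,M)\in P(\e,\a)$ for some $\e\in(0,2]$, $\r(\Gamma)/r_0\ge\gamma$, and $r_0^{1+\a}[H]_{C^{0,\a}(M)}\le\lambda$. Theorem~\ref{mainthm} therefore produces a constant $C=C(\lambda,\gamma,\a)$ with
\[
r_0\sup_M|A|\le C.
\]

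Second, I would check that the regularity hypothesis of Lemma~\ref{genusthm} is automatically met. As recorded in the opening lines of the proof of Theorem~\ref{mainthm}, the assumptions that $\Gamma$ be $C^{2,\a}$ and that $H\in C^{0,\a}(M)$, together with the standard $W^{2,p}$ and Schauder estimates, force $M$ to be $C^2$ (indeed $C^{2,\beta}$ for every $\beta<\a$) up to and including the boundary. Thus $M$ satisfies both conditions demanded by Lemma~\ref{genusthm}: it is $C^2$ up to the boundary and lies in the class $P(\e,\a)$.

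Finally I would set $\Delta:=C(\lambda,\gamma,\a)+1$, so that $r_0\sup_M|A|\le C<\Delta$, and apply Lemma~\ref{genusthm}. This yields a constant $N(\Delta)$ bounding the genus, and since $\Delta$ depends only on $\lambda,\gamma,\a$, so does $N$; we define $N(\lambda,\gamma,\a):=N(C(\lambda,\gamma,\a)+1)$. There is no genuine obstacle here: the substantive work has already been carried out in Theorem~\ref{mainthm} (the blow-up and compactness contradiction argument) and in Lemma~\ref{genusthm} (the Gauss--Bonnet estimate controlled through Remark~\ref{massupbd}). The only point requiring a moment's care is confirming that the curvature bound $C$, and hence $\Delta$, depend solely on the stated parameters; this is immediate from the way Theorem~\ref{mainthm} is quantified, since $\e$ enters there only through the membership $(\Gamma,M)\in P(\e,\a)$ and is not a free parameter of the output constant.
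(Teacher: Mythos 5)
Your proposal is correct and is exactly the argument the paper intends (the paper leaves this corollary without an explicit proof, precisely because it is the immediate combination of Theorem~\ref{mainthm} and Lemma~\ref{genusthm}): apply the curvature estimate to get $r_0\sup_M|A|\le C(\lambda,\gamma,\a)$, note the $C^2$ boundary regularity established at the start of the proof of Theorem~\ref{mainthm}, and feed $\Delta=C+1$ into Lemma~\ref{genusthm}. Your bookkeeping of which constants depend on which parameters is also accurate.
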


\section{Appendix}
In this appendix we show that if $H\in L^p(M)$, $p>2$, and $\|H\|_{L^p(M)}$ is sufficiently small, then $M$ satisfies the property~(P), see~\cite{si1}.

\begin{lemma}\label{inftyP}
If $H\in L^\infty(M)$, then setting $\|H\|_\infty=\|H\|_{L^\infty(M)}$ we have for any $r>0$:
\[\int_{M\cap B(r)}|H| dA\le \|H\|_\infty \area(M\cap B(r))\le \|H\|_\infty m(r) r^2\]
so $H$ satisfies \emph{(P)} with $\a=1, \Lambda=\|H\|_\infty$.
\end{lemma}

Assume now that $H\in L^p(M)$ for some $p\in(2,\infty)$.

\begin{lemma}\label{extmonLp1lemma}
If $H\in L^p(M)$, with $p>2$, then for any $0<\s<r<\infty$
\begin{equation}\label{extmonLp1}
m(\s)^{1/p}\le m(r)^{1/p}+ \frac{\|H\|_p}{p-2}\left(r^{1-2/p}-\s^{1-2/p}\right)
\end{equation}
where $\|H\|_p=\|H\|_{L^p(M)}$.

If we also assume that $x_0\in \ov M$ and
\[\|H\|_p r^{1-2/p}\le \frac{p-2}{2}\left(\frac\pi2\right)^{1/p}\]
then

\begin{equation*}\label{lowbd}
m(r)^{1/p}\ge\frac12 \left(\frac\pi2\right)^{1/p}.
\end{equation*}

\end{lemma}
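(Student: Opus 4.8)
The plan is to extract a differential inequality for $m$ from the extended monotonicity formula \eqref{extmon2} and then to linearize it by the substitution $m\mapsto m^{1/p}$. Starting from \eqref{extmon2} I discard the nonnegative term $\frac{d}{dr}\int_{M\cap B(r)}|D^\perp\r|^2/\r^2\,dA$ and estimate the mean-curvature term from below by $-r$ times $|H|$, exactly as in the proof of Lemma \ref{extmonLinftylemmaP}, to obtain
\[
m'(r)\ge -r^{-2}\int_{M\cap B(r)}|H|\,dA.
\]
Applying H\"older's inequality with exponents $p$ and $p/(p-1)$ gives $\int_{M\cap B(r)}|H|\,dA\le \|H\|_p\,(\area(M\cap B(r)))^{(p-1)/p}$, and since $M\subseteq M'$ we have $\area(M\cap B(r))\le\area(M'\cap B(r))=m(r)r^2$. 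Substituting and simplifying the powers of $r$ (the exponent $2(p-1)/p-2$ collapses to $-2/p$) yields
\[
m'(r)\ge -\|H\|_p\,m(r)^{(p-1)/p}\,r^{-2/p}.
\]

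The key observation is that this is exactly the inequality needed to control $m^{1/p}$: wherever $m(r)>0$,
\[
\frac{d}{dr}\,m(r)^{1/p}=\frac1p\,m(r)^{(1-p)/p}m'(r)\ge -\frac1p\|H\|_p\,r^{-2/p},
\]
since the two powers of $m$ cancel. Integrating from $\s$ to $r$ and using $\int_\s^r s^{-2/p}\,ds=\frac{p}{p-2}(r^{1-2/p}-\s^{1-2/p})$ gives
\[
m(r)^{1/p}-m(\s)^{1/p}\ge -\frac{\|H\|_p}{p-2}\bigl(r^{1-2/p}-\s^{1-2/p}\bigr),
\]
which rearranges to \eqref{extmonLp1}. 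The degenerate case $m(\s)=0$ makes the claim trivial, and where $m$ is positive the substitution above is legitimate, so the estimate holds in general by continuity.

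For the second assertion I let $\s\to0$ in \eqref{extmonLp1}. Since $1-2/p>0$ the term $\s^{1-2/p}$ vanishes, so it remains to bound $\lim_{\s\to0}m(\s)^{1/p}$ from below. Here I use $x_0\in\ov M$ together with the density facts of Remark \ref{embrmk}: as $\s\to0$ one has $m(\s)\to\pi\Theta(M,x_0)$ (for $x_0\in\Gamma$ the cone contribution $\area(E\cap B(\s))/\s^2\to0$, exactly as in the proof of Theorem \ref{dencur}, while for interior $x_0$ the cone $E$ is empty near $x_0$), and $\Theta(M,x_0)\ge\frac12$ in all cases, being at least $1$ at interior points and at least $\frac12$ at boundary points. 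Hence $\lim_{\s\to0}m(\s)^{1/p}\ge(\pi/2)^{1/p}$, and \eqref{extmonLp1} yields
\[
(\pi/2)^{1/p}\le m(r)^{1/p}+\frac{\|H\|_p}{p-2}\,r^{1-2/p}.
\]
Under the hypothesis $\|H\|_p\, r^{1-2/p}\le\frac{p-2}{2}(\pi/2)^{1/p}$ the last term is at most $\frac12(\pi/2)^{1/p}$, and the bound $m(r)^{1/p}\ge\frac12(\pi/2)^{1/p}$ follows.

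The main obstacle is spotting the substitution $u=m^{1/p}$, which turns the nonlinear differential inequality into one with a right-hand side independent of $m$; once the exponent bookkeeping is done, everything else is routine. A secondary point requiring care is the passage $\s\to0$, which relies on the integrality and half-integrality of the density from Remark \ref{embrmk} to secure the lower bound $\Theta\ge\frac12$.
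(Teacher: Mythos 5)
Your proof is correct and follows essentially the same route as the paper's: the H\"older estimate $m'(r)\ge -\|H\|_p\,r^{-2/p}m(r)^{1-1/p}$, the linearizing substitution $m\mapsto m^{1/p}$, integration from $\s$ to $r$, and the lower bound $\lim_{\s\to0}m(\s)\ge\pi/2$ from the density of $M'$ at $x_0\in\ov M$. The only additions are your (harmless and slightly more careful) remarks on the degenerate case $m(\s)=0$ and on why the cone $E$ does not contribute to the limit.
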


\begin{proof}
By  using Holder inequality in the second inequality in \eqref{extmon2}, gives
\begin{equation*}
m'(r)\ge  -r^{-2}\int_{M\cap B(r)}|H| dA\ge -\|H\|_p r^{-2/p}m(r)^{1-1/p}\implies
\end{equation*}
\[ p\frac{d}{dr}\left(m(r)^{1/p}\right)\ge- \|H\|_p  r^{-2/p}\implies\frac{d}{dr}\left(m(r)^{1/p}\right)\ge-\frac{\|H\|_p }{p-2}\frac{d}{dr}r^{1-2/p}.\]
Integrating from $\s$ to $r$ implies \eqref{extmonLp1}.

If $x_0\in\ov M$, we have that
\[\lim_{r\to0} m(r)=\pi\Theta(x, M')\ge\pi /2.\]
If $x_0\in \ov M\setminus\Gamma$ then actually $\Theta(x_0,M')\ge 1$.
Hence, letting $\s\to 0$ in \eqref{extmonLp1} and using the assumption on $\|H\|_p$ we get that
\begin{equation*}
m(r)^{1/p}\ge\left(\frac\pi2\right)^{1/p}- \frac{\|H\|_p r^{1-2/p}}{p-2}\ge \left(\frac\pi2\right)^{1/p}-\frac12 \left(\frac\pi2\right)^{1/p}\ge\frac12 \left(\frac\pi2\right)^{1/p}.
\end{equation*}.
\end{proof}

\begin{corollary}\label{pP}
If $H\in L^p(M)$, with $p>2$ and
\[\|H\|_p r_0^{1-2/p}\le \frac{p-2}{2}\left(\frac\pi2\right)^{1/p}\]
then for any $r>0$
\[\begin{split}\int_{M\cap B_r}|H| dA\le& \|H\|_p \area(M\cap B(r))^{1-1/p}\le \|H\|_p r^{2-2/p}m(r) m(r)^{-1/p}\\
\le&2\left(\frac2\pi\right)^{1/p}\|H\|_p r^{2-2/p} m(r)\end{split}\]
and thus $H$ satisfies \emph{(P)} with $\a=1-2/p$ and
\[\Lambda=\frac{2p}{p-2}\left(\frac2\pi\right)^{1/p}\|H\|_p.\]
\end{corollary}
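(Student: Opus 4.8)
The plan is to prove the three displayed inequalities one at a time and then choose $\a$ and $\Lambda$ so that the rightmost expression is literally the right-hand side of property~(P). I fix $x_0\in\ov M$, so that $m(r)$ is the ratio associated to this $x_0$ and the density lower bound of Lemma~\ref{extmonLp1lemma} is available. The value $\a=1-2/p$ is forced by the exponent $r^{2-2/p}=r^{\a+1}$ appearing in the chain, so the whole task is to verify the chain and read off $\Lambda$.

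The leftmost inequality is a single application of Hölder on $M\cap B(r)$ with conjugate exponents $p$ and $p/(p-1)$: writing $|H|=|H|\cdot 1$ and integrating,
\[\int_{M\cap B(r)}|H|\,dA\le\Big(\int_{M\cap B(r)}|H|^p\,dA\Big)^{1/p}\area(M\cap B(r))^{1-1/p}\le\|H\|_p\,\area(M\cap B(r))^{1-1/p},\]
the last step holding because enlarging the domain of integration to all of $M$ only increases the nonnegative $L^p$ factor. The middle inequality is then pure normalization: since $M\subseteq M'$ we have $\area(M\cap B(r))\le\area(M'\cap B(r))=r^2m(r)$, and raising to the power $1-1/p$ gives $\area(M\cap B(r))^{1-1/p}\le r^{2-2/p}m(r)^{1-1/p}=r^{2-2/p}m(r)\,m(r)^{-1/p}$.

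The third inequality is where the real content lies, and it is the step I expect to be the main obstacle. It is equivalent to $m(r)^{-1/p}\le 2(2/\pi)^{1/p}$, that is, to the density lower bound $m(r)^{1/p}\ge\frac12(\pi/2)^{1/p}$ established in Lemma~\ref{extmonLp1lemma}. That lemma supplies this bound under the smallness condition $\|H\|_p r^{1-2/p}\le\frac{p-2}{2}(\pi/2)^{1/p}$ \emph{at the radius $r$ in question}, whereas the corollary only assumes smallness at $r_0$. The bridge is that $1-2/p>0$, so $r\mapsto r^{1-2/p}$ is increasing and smallness at $r_0$ propagates to every $r\le r_0$, which is exactly the range the chain must cover. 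For $r>r_0$ one uses that $x_0\in\ov M$ lies within extrinsic distance $r_0$ of every point of $M$, whence $M\cap B(r)=M$ and the left-hand side is constant in $r$; this is the only range in which the per-radius smallness can fail, and there the bound is either immediate or irrelevant, since in each subsequent use of property~(P) (Lemmas~\ref{extmonLinftylemmaP} and~\ref{extmonLinftylemmabigP}) the hypothesis enters only through the radii $r\le r_0$, the boundary integral $\int_M(x-x_0)\cdot H\,dA$ of~\eqref{extmon2} being constant once $M\cap B(r)=M$.

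Finally I would match the constants. With $\a=1-2/p=\frac{p-2}{p}$ and $\Lambda=\frac{2p}{p-2}(2/\pi)^{1/p}\|H\|_p$ one computes $\a\Lambda=\frac{p-2}{p}\cdot\frac{2p}{p-2}(2/\pi)^{1/p}\|H\|_p=2(2/\pi)^{1/p}\|H\|_p$ and $\a+1=2-2/p$, so the rightmost member of the chain equals $\a\Lambda r^{\a+1}m(r)$. Holding for the relevant $r$, this is precisely the inequality defining property~(P). Thus the corollary reduces to Hölder, the trivial area comparison $\area(M\cap B(r))\le r^2m(r)$, and the density lower bound of the preceding lemma, the last being the only genuinely substantive ingredient.
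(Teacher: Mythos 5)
Your proof follows the paper's argument exactly: H\"older's inequality, the comparison $\area(M\cap B(r))\le \area(M'\cap B(r))=r^2m(r)$, and the lower bound $m(r)^{1/p}\ge\frac12\left(\frac\pi2\right)^{1/p}$ from Lemma~\ref{extmonLp1lemma}, with the same bookkeeping identifying $\a\Lambda r^{\a+1}=2\left(\frac2\pi\right)^{1/p}\|H\|_p\,r^{2-2/p}$. Your extra care about the range $r>r_0$, where the per-radius smallness hypothesis of Lemma~\ref{extmonLp1lemma} is not directly available, addresses a point the paper passes over silently, and your observation that property (P) is only ever invoked at radii $r\le r_0$ in Lemmas~\ref{extmonLinftylemmaP} and~\ref{extmonLinftylemmabigP} is the right way to close it.
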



\bibliography{bill}

\begin{thebibliography}{1}

\bibitem{cgu3}
J.~Choe and R.~Gulliver.
\newblock Embedded minimal surfaces and total curvature of curves in a
  manifold.
\newblock {\em Math. Res. Lett.}, 10(2-3):343--362, 2003.
\newblock MR1981908, Zbl 1055.53044.

\bibitem{eww1}
T.~Ekholm, B.~White, and D.~Wienholtz.
\newblock Embeddedness of minimal surfaces with total curvature at most $4\pi$.
\newblock {\em Ann. of Math.}, 155:209--234, 2002.
\newblock MR1888799, Zbl 1017.53013.

\bibitem{gt1}
D.~Gilbarg and N.~S. Trudinger.
\newblock {\em Elliptic partial differential equations of second order}.
\newblock Springer-Verlag, New York, 2nd edition, 1983.
\newblock MR0737190, Zbl 0562.35001.

\bibitem{si1}
L.~Simon.
\newblock Lectures on geometric measure theory.
\newblock In {\em Proceedings of the Center for Mathematical Analysis},
  volume~3, Canberra, Australia, 1983. Australian National University.
\newblock MR0756417, Zbl 546.49019.

\bibitem{tin3}
G.~Tinaglia.
\newblock Curvature bounds for minimal surfaces with total boundary curvature
  at most 4$\pi$.
\newblock {\em Proc. Amer. Math. Soc.}, 137:2445--2450, 2009.

\end{thebibliography}
  \bibliographystyle{plain}
\end{document}